\newcounter{braid}
\newcounter{strands}
\DeclareMathAlphabet{\bsf}{OT1}{cmss}{bx}{n}
\def\cross{%
  \@ifnextchar^{\message{Got sup}\cross@sup}{\cross@sub}}
\def\cross@sup^#1_#2{\render@cross{#2}{#1}}
\def\cross@sub_#1{\@ifnextchar^{\cross@@sub{#1}}{\render@cross{#1}{1}}}
\def\cross@@sub#1^#2{\render@cross{#1}{#2}}
\def\render@cross#1#2{
  \def\strand{#1}
  \def\crossing{#2}
  \pgfmathsetmacro{\cross@y}{-\value{braid}*\braid@h}
  \pgfmathtruncatemacro{\nextstrand}{#1+1}
  \foreach \thread in {1,...,\value{strands}}
  {
    \pgfmathsetmacro{\strand@x}{\thread * \braid@w}
    \ifnum\thread=\strand
    \pgfmathsetmacro{\over@x}{\strand * \braid@w + .5*(1 - \crossing) * \braid@w}
    \pgfmathsetmacro{\under@x}{\strand * \braid@w + .5*(1 + \crossing) * \braid@w}
    \draw[braid] \pgfkeysvalueof{/tikz/braid start} +(\under@x pt,\cross@y pt) to[out=-90,in=90] +(\over@x pt,\cross@y pt -\braid@h);
    \draw[braid] \pgfkeysvalueof{/tikz/braid start} +(\over@x pt,\cross@y pt) to[out=-90,in=90] +(\under@x pt,\cross@y pt -\braid@h);
    \else
    \ifnum\thread=\nextstrand
    \else
     \draw[braid] \pgfkeysvalueof{/tikz/braid start} ++(\strand@x pt,\cross@y pt) -- ++(0,-\braid@h);
    \fi
   \fi
  }
  \stepcounter{braid}
}
\tikzset{braid/.style={double=\pgfkeysvalueof{/tikz/braid colour},double distance=1pt,line width=2pt,white}}
\newcommand{\braid}[2][]{%
  \begingroup
  \pgfkeys{/tikz/strands=2}
  \tikzset{#1}
  \pgfkeysgetvalue{/tikz/braid width}{\braid@w}
  \pgfkeysgetvalue{/tikz/braid height}{\braid@h}
  \setcounter{braid}{0}
  \let\sigma=\cross
  #2
  \endgroup
}
\newtheorem{theorem}{Theorem}
\newtheorem{proposition}[theorem]{Proposition}
\newtheorem{lemma}[theorem]{Lemma}
\newtheorem{corollary}[theorem]{Corollary}
\def\Z{\mathbb{Z}}
\def\Q{\mathbb{Q}}
\def\R{\mathbb{R}}
\def\N{\mathbb{N}}
\def\qed{\hfill$\square$\medskip}
\def\Zpk{\mathbb{Z}/p^{k}}
\def\Zpk1{\mathbb{Z}/p^{k-1}}
\newcommand{\rref}[1]{(\ref{#1})}
\newcommand{\beg}[2]{\begin{equation}\label{#1}#2\end{equation}}
\def\r{\rightarrow}
\def\sl2{\widetilde{SL_{2}(\Z)}}
\title[Adjunctions in equivariant homotopy theory]{On some adjunctions in equivariant stable homotopy theory}
\author{Po Hu, Igor Kriz and Petr Somberg}
\thanks{The authors acknowledge support by grant GA\,CR P201/12/G028.
Kriz also acknowledges the support of a Simons Collaboration Grant.}
\begin{document}
\maketitle

\begin{abstract}
We investigate certain adjunctions in derived categories of equivariant spectra, including a right adjoint to
fixed points, a right adjoint to pullback by an isometry of universes, and a chain of two right adjoints to
geometric fixed points. This leads to a variety of interesting other adjunctions, including a chain of $6$ (sometimes
$7$) adjoints involving the restriction functor to a subgroup of a finite group on equivariant spectra indexed
over the trivial universe.
\end{abstract}

\section{Introduction}

In equivariant stable homotopy theory, we study, for a compact Lie group $G$, generalized cohomology
theories which are stable under suspensions by $1$-point compactifications of
finite-dimensional $G$-representations. Such theories are represented in the derived category
$DG$-$\mathscr{U}$-spectra where $\mathscr{U}$ is the complete universe (\cite{lms}), i.e.
a countably-dimensional $G$-inner product space containing infinitely many copies of all irreducible $G$-representations.
Certain functors come up naturally when studying these theories, for example the {\em fixed point functor}
$(?)^G$, which is used in calculating homotopy groups, and also the {\em geometric fixed point functor}
$\Phi^G$, which was quite important in the work of Hu and Kriz \cite{real} on Real-oriented $\Z/2$-spectra,
as well as, later, in the solution by Hill, Hopkins and Ravenel \cite{hhr} of the Kervaire invariant 1 problem.

\vspace{3mm}
The left derived functor of the functor $\Phi^G$ has a right adjoint, which again has a right adjoint.
(Throughout this paper, we will 
focus on the derived context, so this language will often be omitted.) In a recent paper \cite{sanders}, Balmer, Dell'Ambrogio and Sanders 
investigated a general framework in which certain ``geometric functors" between tensor triangulated categories
have two right adjoints. In fact, they proved that under suitable assumptions (see Section \ref{sres} below),
only three possibilities arise, namely a chain of 3, 5 or infinitely many adjoints on both sides. Geometric fixed points,
in the case of a complete universe, satisfy the assumptions of \cite{sanders}, and in this context (see also
\cite{bs}), it 
seemed interesting to look at this example more closely. 

\vspace{3mm}
In a recent paper \cite{sanders1}, B. Sanders investigates another example
of the 3-adjunction \cite{sanders}, namely ``inflation", i.e. the fixed $G$-spectrum, indexed over the complete
universe, associated with a spectrum $X$. In fact, Sanders introduces a beautiful formalism which enables an abstract
treatment of the Adams isomorphism. The right adjoint of the inflation functor on the level of derived categories is
the fixed point functor $(?)^G$ of a spectrum indexed over the complete universe. Again, by the work
of \cite{sanders}, this functor has an additional right adjoint on derived categories, which the authors
of the present paper also observed independently in connection with their work on spectral Lie algebras
\cite{hks}. Unlike the case of geometric fixed points, this functor, however,
is much harder to describe, and even now remains somewhat mysterious.

\vspace{3mm}
Inspecting this example more closely suggests looking beyond the case of a complete
universe. The reason is that the fixed point functor $(?)^G$ on the derived category $DG$-$\mathscr{U}$-spectra
really is a composition of two functors, the first one of which is pullback $i^*$ via the inclusion
$i:\mathscr{U}^G\r \mathscr{U}$ from the ``trivial universe" $\mathscr{U}^G\cong \R^\infty$. On spectra
indexed over
the trivial universe (which represent generalized cohomology theories only stable with respect
to ordinary suspensions), geometric and ordinary fixed points are the same thing.

\vspace{3mm}
A natural question then arises: Does the pullback $i^*$ with respect to an isometry of universes also
have a right adjoint on the level of derived categories? Are the observations of the previous paragraphs
also true for non-complete universes? The answer to the first question is yes, as is, for the most part, the answer
to the second question. It is important to note, however, that we are now leaving the world of the assumptions
of \cite{sanders}, since for spectra indexed over a non-complete universe, the important assumption of
\cite{sanders} that compact objects be strongly dualizable precisely fails for those triangulated categories. 

\vspace{3mm}
Since inflation is a case of functoriality with respect to change of groups (the case of a surjection), what about
restriction, i.e. the case of an injection of groups? In this case, for complete universes, we have the
well known Wirthm\"{u}ller isomorphism \cite{lms}, which also was a part of the inspiration for \cite{sanders}
as well as, for example, Fausk, Hu and May \cite{fhm}. What happens in the case of non-complete universes?
It turns out that in this case, which does not satisfy the assumptions of \cite{sanders},
we always have a chain of $4$ adjoints. However, in the case of a finite group and the trivial universe, 
we show that
there is, in fact, a chain of $6$ adjoints, and in the case of a finite abelian group and the trivial subgroup
for the trivial universe, there is a chain of $7$ adjoints. In special cases, this can be worked out
quite explicitly. We also have counterexamples
showing that in general, these chains of adjunctions extend no further. 

\vspace{3mm}
In a closely related case of the endofunctor of smashing with a finite spectrum, we again have a chain of
infinitely many adjunctions on both sides in the case of a complete universe. Generally, we always have a chain 
of three adjoints, and for the case of a finite group and the trivial universe, we have a chain of
$5$ adjoints ($6$ adjoints in the case of a primary cyclic group). In both cases, these chains 
extend no further in general. These endofunctors, in fact, give us, at least in principle, a description of the right adjoint to 
pullback along an isometry of universes.

\vspace{3mm}

The purpose of the present paper is to treat these situations as completely as we are, at the moment, able, 
both in terms of positive statements and counterexamples, since they
appear to be important for the foundations of equivariant stable homotopy theory.
Here is a more detailed description of the situations we consider:

\begin{enumerate}[label=(\Alph*)]

\item
\label{i1} {\bf Restriction.} It is a tradition from group cohomology to separate pullback with respect 
to a homomorphism of groups into the case when the homomorphism is injective (restriction) and surjective
(inflation). For equivariant spectra, too, the two cases behave somewhat differently, and for this reason, we, too, 
treat them separately.
The forgetful functor $res=res_{H}^G:DG$-$\mathscr{U}$-spectra$\r DH$-$\mathscr{U}$-spectra 
where $H\subseteq G$ is 
a closed subgroup of a compact Lie group $G$, and $\mathscr{U}$ is {\em any $G$-universe} (not necessarily complete)
is well known to have a left adjoint $G\ltimes_H?$ and a right adjoint $F_H[G,?)$. We show that
$F_H[G,?)$ also has a {\em right adjoint} $\Xi^G_H$. 
The ``left projection formula" in the sense of \cite{sanders}, (3.11) is well known
to hold, but we show that the ``right projection formula" in the sense of \cite{sanders}, (2.16) is false in general.
It is well known that if $\mathscr{U}$ is the complete $G$-universe, then the left and right adjoints to $res_{H}^G$ are 
``shifts" of each other, and hence the chain of adjunctions described extends to an infinite chain of adjunctions
on both sides. This is the Wirthm\"{u}ller isomorphism.  However, we show by example that for a general universe, 
$G\ltimes_H(?)$ may not have a left adjoint, and $\Xi^G_H$ may not have a right adjoint. Thus, we have a chain 
of 4 adjoints in general. We show however that in the case when $\mathscr{U}=\R^\infty$ is the ``trivial" universe
and $G$ is finite,
then $G\ltimes_H ?$ has two more adjoints to the left (thus giving a chain of a total of $6$ adjoints), and when $G$
is abelian and finite, then $G\ltimes_H ?$ has three adjoints to the left (thus giving a chain of $7$ adjoints). 
In both cases, we have examples showing that this chain of adjoints may not extend any further. 

\vspace{3mm}

\item
\label{i2} {\bf Smashing with a finite spectrum.}
Let $X$ be a retract of a finite cell $G$-$\mathscr{U}$-spectrum for any universe $\mathscr{U}$. Then the 
functor $X\wedge?:DG$-$\mathscr{U}$-spectra$\r DG$-$\mathscr{U}$-spectra has a right adjoint 
$F(X,?)$. We show that this functor has a further right adjoint $R(X,?)$. In the case when $\mathscr{U}$
is a complete universe, $F(X,?)=DX\wedge ?$, and hence $R(X,?)=X\wedge ?$. However, we show that 
in general, $R(X,?)$ does not have a right adjoint. On the other hand, we show that $X\wedge ?$ always has two
left adjoints (leading to a chain of at least $5$ adjoints), and for $G=\Z/p$ it has exactly three (leading to
a chain of 6 adjoints).

\vspace{3mm}

\item
\label{i3}{\bf Change of universe.}
For an isometry of $G$-universes $i:\mathscr{U}\r\mathscr{V}$ for a compact Lie group $G$, 
the universe change functor 
$i^*:DG$-$\mathscr{V}$-spectra$\r DG$-$\mathscr{U}$-spectra is well known to have a left adjoint,
which we denote by $i_\sharp$. We prove that it also has a right adjoint, which we denote by $i_*$. (In \cite{lms}, 
$i_\sharp$ was denoted by $i_*$. However, in all sorts of contexts of
sheaf theory, $i_*$ is always the right adjoint, which is why we use the alternate notation.)
We show that in general, $i_\sharp$ does not have a left adjoint, and $i_*$ does not have a right adjoint. The
right
projection formula \cite{sanders} (2.16) is false. We have a chain of $3$ adjoints in this case.

\vspace{3mm}
\item
\label{i4}
{\bf Inflation.}
For a compact Lie group $G$, a $G$-universe $\mathscr{U}$, and an onto homomorphism of compact
Lie groups $G\r J=G/H$ for a closed normal subgroup $H$ of $G$, we have the functor
$inf=inf_G^J={}_{\mathscr{U}}inf_G^J
:\mathscr{U}^H$-$J$-spectra$\r\mathscr{U}$-$G$-spectra. This functor is most universal
when $\mathscr{U}$ is an $H$-fixed universe,  since in general we have
\beg{einfch}{{}_\mathscr{U}inf_G^J=i_\sharp\circ{}_{\mathscr{U}^H}inf_G^J}
where $i:\mathscr{U}^H\r\mathscr{U}$ is the inclusion. In the case when $\mathscr{U}$ is an $H$-fixed universe,
$inf_G^J$ has a left adjoint $?/H$ and a right adjoint $(?)^H$ which has a right adjoint 
$\widetilde{E[H]}\wedge inf_G^J$, which has a further right adjoint $F(\widetilde{E[H]},?)^H$. No further
right or left adjoints exist, so this is a chain of $5$ adjoints. Further, the left and right projection formulas
\cite{sanders} (3.11), (2.16) hold and $\widetilde{E[H]}$ is the ``dualizing object" in the sense of 
\cite{sanders}, even though
the assumptions of \cite{sanders} are not satisfied. If $\mathscr{U}$ is an arbitrary universe, then
in general, ${}_{\mathscr{U}}inf_G^J$ has no left adjoint, and its right adjoint $(?)^H$ has a right adjoint 
$i_*(\widetilde{E[H]}\wedge inf_G^J)$, which in general has no right adjoint, so this is a chain of 3 adjoints.
The right projection formula is true. In the case when $\mathscr{U}$ is a complete universe, this case
satisfies the assumptions of \cite{sanders}.

\vspace{3mm}
\item
\label{i5}
{\bf Geometric fixed points.}
For a compact Lie group $G$ and any $G$-universe $\mathscr{U}$, and $J=G/H$ for a closed 
normal subgroup $H\subseteq G$,
we may consider the functor
$\Phi^H={}_\mathscr{U}\Phi^H
=(\widetilde{E[H]}\wedge ?)^H:DG$-$\mathscr{U}$-spectra$\r DJ$-$\mathscr{U}^H$-spectra. This functor
is most universal when $\mathscr{U}$ is the largest universe with given $\mathscr{U}^H$ (up to isomorphism), 
since for a general embedding
$i:\mathscr{U}\r\mathscr{V}$ where $\mathscr{V}^H\cong \mathscr{U}^H$, we have
$${}_\mathscr{U}\Phi^G={}_\mathscr{V}\Phi^G\circ i_\sharp.$$
In the case when $\mathscr{U}$ is a complete universe, the assumptions of \cite{sanders} 
are satisfied, and a $3$-duality therefore holds. In other words, ${}_\mathscr{U}\Phi^G$ has
a right adjoint, which again has a right adjoint, and the projection formula holds. This adjunction in general 
extends no further, so this is a case of $3$-duality in the sense of \cite{sanders}. If $\mathscr{U}$ is not a complete
universe, we still have a $3$-duality, and the projection formula still holds.

\end{enumerate}

\section{The main results}\label{sres}

Let us begin by reviewing the setup of Balmer, Dell'Ambrogio and Sanders \cite{sanders}. In the greatest generality, 
they talk about triangulated categories. A triangulated category $\mathscr{T}$ 
is called {\em compactly generated} if it has coproducts,
and has a set of compact objects $\mathscr{G}$ which generate $\mathscr{T}$. To generate means that if for
$x\in Obj(\mathscr{T})$, for every $z\in \mathscr{G}$, $\mathscr{T}(z,x)=0$, then $x=0$. An object $x$
of $\mathscr{T}$ is called
compact if $\mathscr{T}(x,?)$ sends coproducts in $\mathscr{T}$ to coproducts of abelian groups. 

\vspace{3mm}

In this paper, we consider the derived categories of $G$-$\mathscr{U}$-spectra where $G$ is a compact Lie group, and
$\mathscr{U}$ is a $G$-universe. These categories are compactly generated, where the generators are (integral) 
suspensions of suspension
spectra of orbits (by closed subgroups). These spectra generate essentially by definition of the derived category
(see \cite{lms, ekmm}). The fact that these generators are compact is widely known and widely used, but since we could
not locate a proof in the literature, we present one in the Appendix.

\vspace{3mm}

The authors of \cite{sanders} use the following two facts to construct adjoint functors:

\begin{lemma}\label{lex} (\cite{sanders}, Corollary 2.3)
Let $F:\mathscr{T}\r\mathscr{S}$ be an exact (=triangle-preserving) functor between triangulated categories, where $\mathscr{T}$ is compactly generated. Then 

(a) $F$ has a right adjoint if and only if it preserves coproducts

(b) $F$ has a left adjoint if and only if it preserves products.

\end{lemma}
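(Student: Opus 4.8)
The plan is to reduce both statements to Brown representability in its two forms, which apply exactly because $\mathscr{T}$ is compactly generated; I would first replace the generating set $\mathscr{G}$ by its closure under $\Sigma^{\pm 1}$ so that it is stable under suspension. Two of the four implications are purely formal and use nothing special about $\mathscr{T}$: if $F$ admits a right adjoint then $F$ is itself a left adjoint and so preserves every colimit that exists, in particular coproducts, which gives the ``only if'' of (a); dually, a functor admitting a left adjoint preserves all limits, hence products, giving the ``only if'' of (b). The content lies in the two converse directions.

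For the converse in (a), suppose $F$ preserves coproducts. By Yoneda it is enough to represent, for each $s\in\mathscr{S}$, the contravariant functor $H_s=\mathscr{S}(F(-),s)\colon\mathscr{T}^{\mathrm{op}}\r\mathbf{Ab}$; the representing objects then organize into a functor right adjoint to $F$, with units and naturality forced by the Yoneda lemma. I would observe that $H_s$ is cohomological, being the composite of the exact functor $F$ with the cohomological functor $\mathscr{S}(-,s)$, and that it carries coproducts in $\mathscr{T}$ to products of abelian groups, since $F$ preserves coproducts and $\mathscr{S}(-,s)$ turns coproducts into products. These are precisely the hypotheses of Neeman's Brown representability theorem for the compactly generated category $\mathscr{T}$, which then supplies an object $R s$ and a natural isomorphism $\mathscr{T}(-,R s)\cong\mathscr{S}(F(-),s)$.

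For the converse in (b), suppose instead that $F$ preserves products. Here I would try to corepresent the covariant functor $H'_s=\mathscr{S}(s,F(-))\colon\mathscr{T}\r\mathbf{Ab}$, which is homological (a composite of $F$ with the homological functor $\mathscr{S}(s,-)$) and which sends products in $\mathscr{T}$ to products of abelian groups, because both $F$ and $\mathscr{S}(s,-)$ preserve products. The relevant tool is Brown representability for the dual: the assertion that every product-preserving homological functor out of a compactly generated triangulated category is corepresentable, equivalently that $\mathscr{T}^{\mathrm{op}}$ satisfies Brown representability. This yields an object $L s$ with $\mathscr{T}(L s,-)\cong\mathscr{S}(s,F(-))$, which is the desired left adjoint.

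The hard part is entirely concentrated in the representability inputs. For (a) this is the standard Brown representability theorem, whose proof builds the representing object from the compact generators as the homotopy colimit of an Adams-type tower. For (b) the situation is more delicate, because $\mathscr{T}^{\mathrm{op}}$ is not visibly compactly generated and one therefore cannot obtain (b) by naively dualizing (a). The crux, and the step I would isolate as the essential one, is Neeman's theorem that compact generation of $\mathscr{T}$ is by itself strong enough to force Brown representability for $\mathscr{T}^{\mathrm{op}}$; it is exactly this asymmetric input that makes the product-preserving case go through, and in a write-up it should be cited rather than reproved.
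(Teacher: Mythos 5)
Your proof is correct: both halves reduce to Neeman's Brown representability theorems --- the classical one for the compactly generated category $\mathscr{T}$ in part (a), and the dual form (compact generation of $\mathscr{T}$ forces Brown representability for $\mathscr{T}^{\mathrm{op}}$) in part (b) --- with the adjoint assembled pointwise from the representing objects via Yoneda, and the two ``only if'' directions handled formally. This is exactly the argument behind the result the paper cites (\cite{sanders}, Corollary 2.3); the paper gives no independent proof of this lemma, so your route coincides with the intended one.
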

\qed

\begin{lemma}\label{lex2} (\cite{sanders}, Lemma 2.5)
Let $F:\mathscr{T}\r \mathscr{S}$ be left adjoint to $G:\mathscr{S}\r\mathscr{T}$ where $F,G$ are exact functors
between triangulated categories, and $\mathscr{T}$ is compactly generated. Then $F$ preserves compact objects
if and only if $G$ preserves coproducts.
\end{lemma}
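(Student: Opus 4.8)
The plan is to prove the two implications separately, organized around the observation that one direction is a purely formal consequence of the adjunction, while the other is precisely where the compact generation of $\mathscr{T}$ is needed. Throughout, the key computational device is the natural adjunction isomorphism $\mathscr{S}(F(?),-)\cong\mathscr{T}(?,G(-))$ combined with the two defining properties in play: that $c$ is compact means $\mathscr{T}(c,-)$ commutes with coproducts, and that $\mathscr{G}$ generates means $\mathscr{T}(z,x)=0$ for all $z\in\mathscr{G}$ forces $x=0$.

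First I would treat the implication that if $G$ preserves coproducts then $F$ preserves compact objects. Let $c\in\mathscr{T}$ be compact and let $\{s_i\}$ be an arbitrary family in $\mathscr{S}$. Chaining the adjunction, the hypothesis on $G$, and compactness of $c$ gives
\[
\mathscr{S}(Fc,\coprod_i s_i)\cong\mathscr{T}(c,G(\coprod_i s_i))\cong\mathscr{T}(c,\coprod_i Gs_i)\cong\coprod_i\mathscr{T}(c,Gs_i)\cong\coprod_i\mathscr{S}(Fc,s_i).
\]
One checks that this composite is the canonical comparison map, so $Fc$ is compact. I would note that this direction uses neither compact generation nor exactness; it is pure adjunction formalism.

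For the converse, assume $F$ preserves compacts. I would fix a family $\{s_i\}$ in $\mathscr{S}$ and consider the canonical comparison map $\phi:\coprod_i Gs_i\r G(\coprod_i s_i)$ whose $i$-th component is $G(\iota_i)$ for the coproduct inclusion $\iota_i:s_i\r\coprod_i s_i$; the goal is to show $\phi$ is an isomorphism. For each compact object $w$ of $\mathscr{T}$ the same chain of isomorphisms as above — now invoking compactness of $Fw$ — shows that $\mathscr{T}(w,\phi)$ is an isomorphism, and since every shift $\Sigma^n w$ is again compact this holds after arbitrary suspension as well. It is at this point that I would complete $\phi$ to a distinguished triangle with cone $z$ and run the long exact sequences: applying $\mathscr{T}(c,-)$ and $\mathscr{T}(\Sigma^{-1}c,-)$ for a generator $c\in\mathscr{G}$ (both of these being compact) and using that $\mathscr{T}(\text{compact},\phi)$ is iso in every degree forces $\mathscr{T}(c,z)=0$. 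As this holds for all $c\in\mathscr{G}$, the generation hypothesis yields $z=0$, so $\phi$ is an isomorphism and $G$ preserves coproducts.

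The main obstacle to watch in the converse is the bookkeeping that bridges "$\mathscr{T}(c,\phi)$ an isomorphism for all compact $c$" and "$\phi$ an isomorphism." Two things must be handled with care: that the detection via $\mathscr{G}$ really suffices after passing through the cone (which needs the isomorphism statement in all relevant degrees, hence the use of $\Sigma^{-1}c$ alongside $c$, both compact so that $F$ still sends them to compacts), and that the adjunction isomorphisms genuinely assemble into the map induced by $\phi$ rather than an abstract isomorphism — a naturality verification in the coproduct inclusions $\iota_i$. Once these compatibilities are confirmed, the triangulated structure of $\mathscr{T}$ together with its compact generation closes the argument.
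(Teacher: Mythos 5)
Your proof is correct. Note that the paper itself offers no argument for this lemma: it is stated as a quotation of \cite{sanders}, Lemma 2.5, with the proof outsourced to that reference, so there is no internal proof to compare against. Your argument---the purely formal adjunction computation for the direction ``$G$ preserves coproducts $\Rightarrow$ $F$ preserves compacts,'' and for the converse the verification that $\mathscr{T}(w,\phi)$ is an isomorphism for every compact $w$, followed by passing to the cone of $\phi$ and detecting its vanishing with the compact generators (using that shifts of compacts are compact, and naturality of the comparison maps)---is exactly the standard Neeman-style proof given in the cited source, so it correctly fills in what the paper takes as known.
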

\qed

\vspace{3mm}

The authors of \cite{sanders} consider a functor $f^*:\mathscr{T}\r\mathscr{S}$
and investigate patterns of adjunction of the following form:
\beg{ethree}{\diagram
\dto^{f^*} &&\dto^{f^{(1)}}\\
&\uto_{f_*}&
\enddiagram}
and
\beg{efive}{\diagram
&\dto^{f^*} &&\dto^{f^{(1)}}&\\
\uto_{f_{(1)}}&&\uto_{f_*}&&\uto_{f_{(-1)}}
\enddiagram}
Their assumption is that $f^*:\mathscr{T}\r\mathscr{S}$ is an exact functor between compactly generated
tensor triangulated categories which preserves the
symmetric monoidal structure, and preserves coproducts. They additionally assume that both in $\mathscr{T}$
and $\mathscr{S}$, compact objects are strongly dualizable. Under these assumptions, they prove that  \rref{ethree}
always occurs, and additionally, one has the {\em right projection formula} stating that we have an isomorphism
\beg{erproj}{\diagram x\wedge f_*(y)\rto^(.45)\cong & f_*(f^*(x)\wedge y)\enddiagram
}
where \rref{erproj} is the canonical morphism. (We write the symmetric monoidal structure as $\wedge$, since 
in the topological contexts we discuss, it is always the smash product). The dualizing object
by definition is $f^{(1)}(\mathbbm{1})$ where $\mathbbm{1}$ is the unit of the
symmetric monoidal structure (in all our cases, this is the sphere spectrum $S$ in the appropriate
category). Then the authors of \cite{sanders} prove
that the additional right adjoint $f_{(-1)}$ exists if and only if the additional left adjoint $f_{(1)}$ exists, leading
to the \rref{efive} scenario. Additionally, if that happens, they prove the {\em left projection formula}
\beg{elproj}{\diagram f_{(1)}(f^*(x)\wedge y) \rto^(.55)\cong & x\wedge f_{(1)}(y)\enddiagram 
}
where \rref{elproj} is, again, the canonical morphism.

The authors of \cite{sanders} also prove that if either $f_{(1)}$ admits a left adjoint or $f_{(-1)}$ admits
a right adjoint, then \rref{efive} extends to an infinite chain of adjunctions on both sides.

\vspace{3mm}
The category $DG$-$\mathscr{U}$-spectra for a compact Lie group $G$ and a universe $\mathscr{U}$ is always
compactly generated, where the compact generators are (de)suspensions of suspension spectra of orbits. 
Further, compact objects are strongly dualizable when $\mathscr{U}$ is a complete universe (i.e. contains 
representatives of
all
isomorphism classes of
finite-dimensional $G$-representations). However, compact objects are not strongly dualizable in general, notably when
$G$ is non-trivial and $\mathscr{U}$ is the trivial universe, containing only copies of the trivial representation. 
Therefore, the conclusions of \cite{sanders} do not, strictly speaking, apply to most of our situations. 
Nevertheless, Lemmas \ref{lex} and \ref{lex2} have the immediate 

\begin{corollary}\label{cor1}
If a functor $f^*:\mathscr{T}\r\mathscr{S}$ is an exact functor
where $\mathscr{T},\mathscr{S}$ are compactly generated triangulated 
categories, preserves compact objects and coproducts, then it has a right adjoint $f_*$ which in turn has another 
right adjoint $f^{(1)}$, i.e. the scenario \rref{ethree} occurs.
\end{corollary}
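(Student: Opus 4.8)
The plan is to obtain the two right adjoints by applying Lemma \ref{lex}(a) twice, with Lemma \ref{lex2} serving as the bridge between the two applications. First I would produce the first right adjoint directly: since $f^*$ is exact and preserves coproducts, and its source $\mathscr{T}$ is compactly generated, Lemma \ref{lex}(a) immediately yields a right adjoint $f_*:\mathscr{S}\r\mathscr{T}$.

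Before the second application I would record that $f_*$ is itself exact. This is the only background fact used beyond the two quoted lemmas: the right adjoint of a triangle-preserving functor between triangulated categories is automatically triangle-preserving, so exactness of $f_*$ comes for free and no additional hypothesis is required.

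The key step is to verify that $f_*$ preserves coproducts, which is exactly the condition needed to run Lemma \ref{lex}(a) a second time. For this I would invoke Lemma \ref{lex2} applied to the adjunction $f^*\dashv f_*$, that is, taking $F=f^*$ and $G=f_*$. The source of $F$ is $\mathscr{T}$, which is compactly generated, so the lemma applies and asserts that $f^*$ preserves compact objects if and only if $f_*$ preserves coproducts. By hypothesis $f^*$ does preserve compact objects, so this equivalence gives that $f_*$ preserves coproducts.

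Finally, with $f_*:\mathscr{S}\r\mathscr{T}$ now known to be exact, coproduct-preserving, and defined on the compactly generated category $\mathscr{S}$, a second application of Lemma \ref{lex}(a) produces a right adjoint $f^{(1)}:\mathscr{T}\r\mathscr{S}$. The resulting chain $f^*\dashv f_*\dashv f^{(1)}$ is precisely the scenario \rref{ethree}. I do not expect any genuine obstacle here; the entire content lies in the bookkeeping of which category must be compactly generated at each stage (namely $\mathscr{T}$ for the first adjoint and for the application of Lemma \ref{lex2}, and $\mathscr{S}$ for the second adjoint), together with the observation that preservation of compact objects is converted, via Lemma \ref{lex2}, into the preservation of coproducts that feeds the second application of Lemma \ref{lex}(a).
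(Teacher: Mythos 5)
Your proposal is correct and takes essentially the same route as the paper: apply Lemma \ref{lex}(a) to get $f_*$, use Lemma \ref{lex2} on the adjunction $f^*\dashv f_*$ to convert preservation of compact objects by $f^*$ into preservation of coproducts by $f_*$, and then apply Lemma \ref{lex}(a) a second time to $f_*$ on the compactly generated category $\mathscr{S}$. The only divergence is at the exactness step, where you cite the standard fact that a right adjoint of an exact functor is automatically exact, while the paper instead argues this directly by testing the image triangles against compact objects of $\mathscr{T}$ via adjunction and compact generation; both justifications are valid.
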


\begin{proof}
By Lemma \ref{lex}, a right adjoint $f_*$ exists and, by Lemma \ref{lex2}, it preserves coproducts. Additionally,
since $\mathscr{T}$ is compactly generated, distinguished triangles can be tested by long exact sequences
on morphism groups from compact objects, so $f_*$ preserves distinguished triangles by adjunction and
by the fact that $f^*$ preserves compact objects. Therefore, the additional right adjoint $f^{(1)}$ exists by
Lemma \ref{lex2}.
\end{proof}

\vspace{3mm}
\noindent
{\bf Comment:} When a triangulated category is compactly generated, compact objects are precisely objects
of the smallest thick subcategory generated by the compact generators. Therefore, for the functor $f^*$ to
preserve compact objects, it is sufficient to show that it sends the compact generators to compact objects.

\vspace{3mm}
In the scenarios described in the introduction,  the functors $res_H^G$ of Case \ref{i1}, $X\wedge ?$ of Case \ref{i2},
$i_\sharp$ of Case \ref{i3}, $inf^{J}_{G}$ of Case \ref{i4} and ${}_{\mathscr{U}}\Phi^G$ of
Case \ref{i5} are all exact functors which
preserve compact generators and coproducts, so Corollary \ref{cor1} applies if we take these
functors for $f^*$. In other words, a right adjoint $f_*$ exists which, in turn, has again a right adjoint $f^{(1)}$.
Interestingly, the functors $f^{(1)}$ in this case do not appear to have been noticed in most cases. In this note,
we will consider some examples. 

\vspace{3mm}
Before that, however, let us discuss the \rref{efive} scenario, i.e. the case of a chain of $5$
adjoints. It turns out that under our weaker assumptions,
it is false that the existence of one of the
adjoint $f_{(-1)}$, $f_{(1)}$ would imply
the existence of the other. Nevertheless, the existence of the functors $f_{(1)}$ and $f_{(-1)}$ can still be tested using 
the following

\begin{corollary}
\label{cfive}
If $f^*:\mathscr{T}\r\mathscr{S}$ is an exact functor between compactly generated triangulated categories
which preserves coproducts and compact objects. Then the functor $f^{(1)}$ has a right
adjoint if and only if $f_*$ preserves compact objects, and $f^*$ has a left adjoint if and only if it preserves products.
\end{corollary}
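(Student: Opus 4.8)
The plan is to observe that both equivalences fall out directly from Lemmas \ref{lex} and \ref{lex2}, once one keeps careful track of which of the two categories plays the role of the compactly generated one in each invocation, and once one confirms that all the functors in the chain $f^*\dashv f_*\dashv f^{(1)}$ are exact. So there is essentially no computation to do; the content is bookkeeping with the two cited facts from \cite{sanders}.

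I would dispose of the second equivalence first, since it is immediate. The functor $f^*:\mathscr{T}\r\mathscr{S}$ is exact by hypothesis and its source $\mathscr{T}$ is compactly generated, so Lemma \ref{lex}(b) says verbatim that $f^*$ admits a left adjoint $f_{(1)}$ if and only if $f^*$ preserves products. There is nothing further to do here.

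For the first equivalence I would proceed in two steps. First, since $f^{(1)}:\mathscr{T}\r\mathscr{S}$ is a right adjoint of the exact functor $f_*$ between triangulated categories, it is itself exact; as its source $\mathscr{T}$ is compactly generated, Lemma \ref{lex}(a) gives that $f^{(1)}$ has a right adjoint $f_{(-1)}$ if and only if $f^{(1)}$ preserves coproducts. Second, I would feed the adjunction $f_*\dashv f^{(1)}$ into Lemma \ref{lex2}, taking $F=f_*:\mathscr{S}\r\mathscr{T}$ for the left adjoint and $G=f^{(1)}:\mathscr{T}\r\mathscr{S}$ for the right adjoint. The point to watch is that the hypothesis of Lemma \ref{lex2} requires the \emph{source} of the left adjoint to be compactly generated; here that source is $\mathscr{S}$, which is compactly generated by assumption. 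The lemma then yields that $f_*$ preserves compact objects if and only if $f^{(1)}$ preserves coproducts. Chaining the two steps gives that $f^{(1)}$ has a right adjoint if and only if $f_*$ preserves compact objects, as claimed.

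The only items requiring care, rather than a genuine obstacle, are the bookkeeping verifications: that $f_*$ is exact (already recorded in the proof of Corollary \ref{cor1}), that $f^{(1)}$ is exact (a right adjoint of an exact functor between triangulated categories is again exact), and---most importantly---that in the appeal to Lemma \ref{lex2} it is $\mathscr{S}$, not $\mathscr{T}$, that must be compactly generated, since the roles of the two categories are interchanged relative to the earlier applications of these lemmas. Because we assume both $\mathscr{T}$ and $\mathscr{S}$ compactly generated, this costs nothing, and no mathematical input beyond the two lemmas of \cite{sanders} is needed.
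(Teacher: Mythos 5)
Your proof is correct and follows exactly the paper's own route: the second equivalence is Lemma \ref{lex}(b) applied to $f^*$, and the first is the chain Lemma \ref{lex}(a) (applied to the exact functor $f^{(1)}$ with compactly generated source $\mathscr{T}$) combined with Lemma \ref{lex2} (applied to the adjunction $f_*\dashv f^{(1)}$ with $\mathscr{S}$ compactly generated). The paper compresses this into one sentence; your version merely makes explicit the bookkeeping about exactness of $f_*$ and $f^{(1)}$ and about which category must be compactly generated in each citation, all of which is accurate.
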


\begin{proof}
The first statement follows from Lemma \ref{lex2} and Lemma \ref{lex} (a), and the second statement follows
from Lemma \ref{lex} (b).
\end{proof}

\vspace{3mm}
In case \ref{i1} and \ref{i2} of the Introduction, the functor $f_{(1)}$ exists, but in general the functor
$f_{(-1)}$ does not. In case \ref{i3}, in general neither $f_{(1)}$ nor $f_{(-1)}$ exists. In case \ref{i4} for a trivial
universe, both $f_{(1)}$ and $f_{(-1)}$ exist, and in cases \ref{i5} for the complete universe, neither $f_{(1)}$ nor
$f_{(-1)}$ exists in general.

\vspace{3mm}
Additionally, in cases \ref{i1}, \ref{i3}, \ref{i4} and \ref{i5}, the functor $f^*$ preserves symmetric 
monoidal structure, and hence we can ask about the projection formulas \rref{erproj} and \rref{elproj}. 
In case \ref{i1}, the formula \rref{elproj} holds, but the formula \rref{erproj} is false in general. 
In case \ref{i3}, the formula \rref{erproj} is false in general. In case \ref{i4} for a trivial universe, 
the formula \rref{elproj} is false in general, 
and the formula \rref{elproj} is true for the trivial universe but 
false in general. In case \ref{i5}, the formula \rref{erproj} for
a complete universe holds (this follows from Theorem 2.15 of \cite{sanders}, since the assumptions in this
case are satisfied). For a general universe, the formula \rref{erproj} is false in this case.

\vspace{3mm}
We now turn to discussing each case of the Introduction individually in more detail.

\vspace{3mm}
\subsection{Inflation for the case of an $H$-fixed universe}\label{inffixed}
We will discuss this part of Case \ref{i4} first, since it will be used in our other discussions. Here 
we have a normal subgroup $H$ of $G$, $J=G/H$, and $f^*=inf_G^J:DJ$-$\mathscr{U}$-spectra$
\r DG$-$\mathscr{U}$-spectra, where $\mathscr{U}=\mathscr{U}^H$
is an $H$-trivial $G$-universe.
Unless $H=\{e\}$, the assumptions of \cite{sanders} are never satisfied, because the suspension
spectrum of the orbit $G/\{e\}$
is not strongly dualizable in $DG$-$\mathscr{U}$-spectra.  Nevertheless, we have
the ``$5$-scenario" of \cite{sanders}. A left adjoint of $f^*$ is
$f_{(1)}=?/H$ (more precisely its left derived functor, i.e. $?$ should be a cell
spectrum). On the other hand, a right adjoint is $f_*=?^H$, which in turn has a right adjoint
$f^{(1)}=(inf_G^J(?))\wedge \widetilde{E[H]}$ where $E[H]=E\mathscr{F}[H]$ is the classifying space
of the family $\mathscr{F}[H]$ of closed subgroups of $G$ which $H$ is not subconjugate to. (Note that
since $H$ is normal, this is just the family of all subgroups not containing $H$.)
Here $\widetilde{X}$
means the unreduced suspension of a $G$-space $X$.
To see this, mapping a $G$-$\mathscr{U}$-cell spectrum $X$ to $inf_G^JY\wedge \widetilde{E[H]}$ where
$Y$ is a $J$-$\mathscr{U}$-cell spectrum, the cells of $X$ with isotropy superconjugate to $H$ (which 
cannot be attached
to cells with isotropy not superconjugate to $H$, since $\mathscr{U}$ is $H$-fixed) must map to $Y$, while there
is no obstruction to mapping any other cells to $inf_G^JY\wedge \widetilde{E[H]}$; a similar argument
applies to homotopies. The functor $f^{(1)}$ has a right adjoint $f_{(-1)}=F(\widetilde{E[H]},?)^H$.
It is also worth noting that not only the functor $f^*$, but also the functor $f_*$ is strongly symmetric
monoidal, while $f^{(1)}$ preserves the smash product but not the unit.

The left projection formula \rref{elproj} states that
\beg{elinfp}{\diagram (inf_G^J(X)\wedge Y)/H\rto^(.55)\sim & X\wedge (Y/H),\enddiagram}
which is true. In effect, both sides clearly preserve homotopy cofibers, so it suffices to consider the case
when $X=J/K_+$ for some closed subgroup $K\subseteq J$. Then letting $\widetilde{K}$ be the inverse
image of $K$ via the projection $G\r J$, $inf^J_G(X)=G/\widetilde{K}_+$. Now again by preservation of
homotopy cofibers, we may also assume that $Y=G/\Gamma_+$ for some closed subgroup $\Gamma\subseteq G$,
at which point \rref{elinfp} follows from the analogous consideration on spaces ($G$-orbits).
(We will show below in Subsection \ref{ssinfg} that for a $G$-universe $\mathscr{U}$ which is not $H$-fixed,
inflation does not have a left adjoint, so the left projection formula makes no sense.)

The right projection formula \rref{erproj} states that
$$\diagram X\wedge (Y^H)\rto^(.4)\sim & (inf_G^J(X)\wedge Y)^H,\enddiagram$$
which is also true by a similar induction. (In fact, the right projection formula
is true even without assuming that $\mathscr{U}$ is $H$-fixed, see in Subsection \ref{ssinfg} below.)

The ``dualizing object" in this case is the $G$-$\mathscr{U}$-suspension spectrum of $\widetilde{E[H]}$. The
functor $f^{(1)}$ does not generally preserve compact objects, since the dualizing object is not compact. 
(For example for $G=H=\Z/2$, and the trivial universe,
it suffices to show that $E\Z/2_+$ is not compact, which, since $?/H$ preserves
compacts, reduces to showing that $B\Z/2_+$ is not a compact spectrum. This is well known (and also
follows, for example, from Lemma \ref{lcounter} of the Appendix.) Thus, $f_{(-1)}$ in general does not
have a right adjoint.

On the other hand, $f_{(1)}$ does not in general preserve products. Again, for $G=H=\Z/2$ and the trivial
universe, consider the countable product of the spectra $E\Z/2_+\wedge K$ (where $K$ is, say, the fixed
$K$-theory $\Z/2$-spectrum). 
The countable product of these spectra still has trivial fixed points, so the canonical map 
$$E\Z/2_+\wedge 
\displaystyle\prod_\N K\r
\prod_\N (E\Z/2_+\wedge K)$$
is an equivalence. Applying $?/(\Z/2)$ and using \rref{elinfp}, we get 
$$\diagram
B\Z/2_+\wedge\displaystyle \prod_\N K\rto^(.4)\sim &(\displaystyle\prod_{\N}(E\Z/2_+\wedge K))/\Z/2\rto
&\displaystyle\prod_\N(B\Z/2_+\wedge K)
\enddiagram
$$
where the second map is the map which would be an equivalence if $f_{(1)}$ preserved
products, while the composition is \rref{ebcounter} of the Appendix,
which is not an equivalence by Lemma \ref{lcounter}. Thus, in general, $f_{(1)}$ does not have a left 
adjoint.

\vspace{3mm}

\subsection{Restriction} \label{ssres}
Let $f^*=res_H^G:DG$-$\mathscr{U}$-spectra
$\r DH$-$\mathscr{U}$-spectra be the forgetful functor where $\mathscr{U}$ is
any $G$-universe. We have a left adjoint
$f_{(1)}=G\ltimes_H?$ and a right
adjoint $f_*=F_H[G,?)$. It is well known \cite{lms} that the left projection 
formula \rref{elproj} holds. By Corollary \ref{cor1}, $f_*$ always has another right adjoint, $f^{(1)}$.
When $\mathscr{U}$ is the complete universe, then, of course, the assumptions
of \cite{sanders} are satisfied, and in fact the classical Wirthm\"uller isomorphism \cite{lms}
asserts that
we have an infinite chain of adjunctions. To get a feel for what the functor $f^{(1)}$ is like in general, let us
consider an example.

\vspace{3mm}

Let us consider the case when $G=\Z/2$, $H=\{e\}$, and $\mathscr{U}=
\R^\infty$ is the
trivial universe. In this case, we can construct a cofibration sequence for an $\{e\}$-spectrum $X$
\beg{ei1z2}{\Z/2_+\wedge X\r F(\Z/2_+,X)\r \widetilde{E\Z/2}\wedge inf_{\Z/2}^{\{e\}}X
}
as follows:
One has $f^*f_*(X)=X\times X$ since, non-equivariantly, $G$ is a $2$-point set,
and the first map \rref{ei1z2} is the adjoint to the canonical map $X\r X\times \{*\}\r X\times X$. The first
morphism \rref{ei1z2} is an equivalence non-equivariantly by stability (as it is the canonical morphism from
the coproduct to the product of two copies of $X$), but the source has trivial fixed points,
while the target has fixed points $X$ (embedded diagonally). Thus, the cofiber in \rref{ei1z2} has
fixed points $X$ and is trivial non-equivariantly. 

Now recall that the category $DG$-$\R^\infty$-spectra (``naive"
$G$-spectra) for $G$ finite is equivalent to the diagram derived category of functors from the
orbit category $\mathcal{O}_G$ into spectra. This ``folklore" fact about $G$-$\R^\infty$-spectra is proved
the same way as for $G$-spaces: For $G$-spaces, the forgetful functor $U$ from $\mathcal{O}_G^{Op}$-spaces
to $G$-spaces (by taking $X\mapsto X_{G/\{e\}}$) is left adjoint to the functor $\Psi$ where for a $G$-space $X$,
$\Psi(X)(G/H)=X^H$ (the right Kan extension). If we take object-wise equivalences on 
$\mathcal{O}_G^{Op}$-spaces,  and let $\Psi$
create equivalences on $G$-spaces, it is formal that the left derived functor $LU$ is an inverse to the total derived
functor $D\Psi$ on derived categories.

Now since the first map \rref{ei1z2} is an equivalence
non-equivariantly, and the first term has trivial fixed points, the homotopy cofiber of that map
is trivial non-equivariantly and has fixed points $X$. Thus, (see \cite{lms}, Section II.8),
it maps into $\widetilde{E\Z/2}\wedge inf_{\Z/2}^{\{e\}}X$, and the map induces an equivalence on
fixed points as well as non-equivariantly, and thus, is an equivalence.

Now \rref{ei1z2}
can be written in the framework \rref{ethree} as
\beg{ei1z2a}{f_{(1)}(X)\r f_*(X)\r \widetilde{E\Z/2}\wedge inf_{\Z/2}^{\{e\}}X.
}
By the results of Subsection \ref{inffixed}, 
the last term of the cofibration sequence \rref{ei1z2a} is in fact the right adjoint to the
functor $(?)^{\Z/2}$ on derived categories. 

\vspace{3mm} 
In fact, while it is not crucial for what follows, it is 
interesting to realize that for $X=S$, the cofibration \rref{ei1z2} can be realized geometrically by stabilizing the canonical cofibration sequence
\beg{essexx}{S^n\vee S^n \r S^n\times S^n \r S^n\wedge S^n,}
where
all terms are given the $\Z/2$-equivariant structure with the generator of $\Z/2$ switching factors. The last term
is equivariantly homeomorphic to $S^{n+n\alpha}$, where $\alpha$ is
the $1$-dimensional real sign representation. Desuspending $n$ times and taking a colimit with $n\r\infty$
(using the fact that $S^{\infty\alpha}$ is a model for $\widetilde{E\Z/2}$), the connecting map of
\rref{essexx}, after stabilization, gives a model of the connecting map of \rref{ei1z2} for $X=S$.

The connecting map of \rref{essexx} can then be described as a map
\beg{esp1}{S^{n+n\alpha}\r \Z/2_+\wedge S^{n+1}}
given as the (trivial) suspension of the following map: in $S^{2n-1}$,
consider an embedding of $S^{n-1}\times S^{n-1}$, thus splitting $S^{2n-1}$ into two solid tori $S^{n-1}\times D^n$.
Consider the $\Z/2$-equivariant structure where the generator swaps the two factors of $S^{n-1}\times S^{n-1}$,
and the two solid tori. Then we get a map into $S^{n}\vee S^{n}$ by collapsing $S^{n-1}\times S^{n-1}$
to a point, and projecting each solid torus $S^{n-1}\times D^n$ to $D^n$ (with the boundary collapsed to a point).
This map is equivariant when we consider on $S^n\vee S^n$ the $\Z/2$-equivariant structure where the generator
swaps factors. 

\vspace{3mm}
We can see that the connecting map is non-trivial by observing that applying $(?)/(\Z/2)$ to the
first morphism \rref{ei1z2}, we obtain the canonical map
\beg{ei1sp2}{X\r Sp^2(X)
}
which does not split for $X=S$ by considering Steenrod operations (cf. \cite{nakaoka,milgram}).

\vspace{3mm}
Let us study the cofibration sequence \rref{ei1z2} in more detail. The second morphism in \rref{ei1z2}
was constructed using obstruction theory. More explicitly, we have 
\beg{efuj}{F(\Z/2_+,X)^{\Z/2}=X=(\widetilde{E\Z/2}\wedge inf_{\Z/2}^{\{e\}}X)^{\Z/2},}
and $res^{\Z/2}_{\{e\}}
(\widetilde{E\Z/2}\wedge inf_{\Z/2}^{\{e\}}X)$ is contractible, and hence, for a cell $\{e\}$-spectrum 
$X$, the (non-equivariant) space $\mathscr{Q}_X$
of $\Z/2$-equivariant morphisms
$$F(\Z/2_+,X)\r \widetilde{E\Z/2}\wedge inf_{\Z/2}^{\{e\}}X$$
extending the morphism \rref{efuj} is contractible. Now for each $\alpha\in \mathscr{Q}_X$, the
composite morphism 
\beg{efuj1}{\diagram\Z/2_+\wedge X\rto^\iota & F(\Z/2_+,X)\rto^(.45)\alpha
& \widetilde{E\Z/2}\wedge inf_{\Z/2}^{\{e\}}X\enddiagram}
where $\iota$ is the first morphism of \rref{ei1z2}
is null-homotopic (since the source has trivial fixed points), and for the same reason, the space
$\mathscr{S}_X$ of pairs $(\alpha, h)$ where $h$ is a null-homotopy of \rref{efuj1}, is also contractible. 
Note that specifying an element of $\mathscr{S}_X$ is equivalent to specifying a morphism
of $\Z/2$-$\R^\infty$-spectra
\beg{efuj2}{C(\iota)\r \widetilde{E\Z/2}\wedge inf_{\Z/2}^{\{e\}}X}
extending the identification \rref{efuj} (where $C$ denotes the mapping cone). Thus, the space of such morphisms is
contractible. 
Further, every such morphism \rref{efuj2} is a $\Z/2$-equivariant equivalence. 

Similarly, one also sees that the
space of morphisms
\beg{efuj3}{\widetilde{E\Z/2}\wedge inf_{\Z/2}^{\{e\}}X\r C(\iota)
}
which are the identity on fixed points (using the identification \rref{efuj}) is contractible, and that each of those
morphisms is an equivalence. Even more generally, if we denote the morphism $\iota$ of \rref{efuj1} more
specifically by $\iota_X$, then for any morphism of $\{e\}$-spectra
\beg{efuj4}{f:X\r Y,
}
the space of all morphisms
$$\widetilde{E\Z/2}\wedge inf_{\Z/2}^{\{e\}}X\r C(\iota_Y)$$
which restrict to $f$ on fixed points is contractible. This implies, in particular, that the morphism \rref{efuj3}, and hence its 
composition
\beg{ei1z2b}{\widetilde{E\Z/2}\wedge inf_{\Z/2}^{\{e\}}X\r \Sigma\Z/2_+\wedge X}
with the canonical natural morphism
$$C(\iota)\r \Sigma \Z/2_+\wedge X$$
is natural in the derived category.
(See \cite{lms}, Section II.8 for further discussion of this method.) 

\vspace{3mm}
Now the natural transformation \rref{ei1z2b} in $D\Z/2$-$\R^\infty$-spectra has a right adjoint
\beg{ei1z2c}{ \Omega (res_{\{e\}}^{\Z/2}Z)\r F(\widetilde{E\Z/2},Z)^{\Z/2}
}
(here $Z$ is a $\Z/2$-$\R^\infty$-spectrum, and \rref{ei1z2c} is a morphism of $\{e\}$-spectra).
Continuing \rref{ei1z2b} to the right, we get a sequence of functors in $D\Z/2$-$\R^\infty$-spectra
$$\widetilde{E\Z/2}\wedge inf_{\Z/2}^{\{e\}}X\r \Sigma\Z/2_+\wedge X\r \Sigma F(\Z/2_+,X),$$
which is a cofibration sequence object-wise. Therefore, we have a right adjoint sequence
\beg{efuj10}{\Omega F(\widetilde{E\Z/2},Z)\r \Omega (res_{\{e\}}^{\Z/2}Z)\r F(\widetilde{E\Z/2},Z)^{\Z/2}}
where the composition of the two maps \rref{efuj10} is $0$, since the adjoint of the $0$ morphism is $0$.
Additionally, however, \rref{efuj10} induces a long exact sequence on homotopy groups by the adjunction, 
and thus is a cofibration sequence object-wise.

Using stability, we have a natural sequence of right adjoints on
the level of derived categories
\beg{ei1z2dual}{F(\widetilde{E\Z/2},Z)^{\Z/2}\r f^{(1)}Z\r f^{*}(Z).
}
where $Z$ is a $\Z/2$-$\R^\infty$-spectrum. It is worth noting that it is not obvious how to conclude this
directly on the level of triangulated categories: We may define a distinguished triangle of functors
$$A\r B\r C\r \Sigma A$$
as a sequence of natural transformations which is a distinguished triangle on every object. However, it is
then not obvious that if $A$ and one of the functors $B$, $C$ have a right or left adjoint, so does the third.
Additionally, even if this occurs, i.e. $A$, $B$, $C$ have right (or left) adjoints $A^\prime$, $B^\prime$, $C^\prime$,
it is then not obvious that the adjoint triangle
$$C^\prime\r B^\prime\r A^\prime\r \Sigma C^\prime$$
is distinguished. (This seems, in fact, like an interesting problem.)

This is the reason why a modern algebraic topologist seldom works fully in the triangulated category directly,
and always, implicitly or explicitly, 
has the underlying ``point set-level" (i.e. non-derived) category of spectra in mind. We shall see  
more examples of this technique below.

\vspace{3mm}
Note that from the cofibration sequence 
\rref{ei1z2dual} it follows that in this case $f^{(1)}$ does not have a right adjoint,
since $f_*$ does but $F(\widetilde{E\Z/2},?)^{\Z/2}$ does not, as already shown in Subsection \ref{inffixed}.
In effect, by Corollary \ref{cfive}, it suffices to observe that in \rref{ei1z2a}, $f_*$ does not preserve
compact objects, since $f_{(1)}$ preserves them and $\widetilde{E\Z/2}\wedge inf_{\Z/2}^{\{e\}}?$ 
does not (since $F(\widetilde{E\Z/2},?)^{\Z/2}$ has no derived right adjoint).

\vspace{3mm}
However, using the same method,
from formula \rref{ei1z2}, we see that inductively, $\Z/2_+\wedge ?$ has as many left adjoints as
the last term of the cofibration $\widetilde{E\Z/2}\wedge inf_{\Z/2}^{\{e\}}?$. They key point is that
the homotopy fiber $F$ functor from the category of morphisms of (non-derived) spectra
has a left adjoint, namely the homotopy
cofiber (i.e. mapping cone) functor $C$. Additionally, for a morphism $f:X\r Y$ in the non-derived category of spectra,
if we denote by $i:Y\r Cf$ the canonical natural morphism, then we have a canonical natural morphism
$X\r Fi$, which is an equivalence  if $X$, $Y$ are cell. Similar comments apply to
$G$-$\mathscr{U}$-spectra for any $G$, $\mathscr{U}$. Thus, starting from a homotopy cofiber
sequence of functors on the non-derived level, we may replace it by a homotopy fiber sequence on the 
non-derived level, which has a left adjoint, which is a homotopy cofiber sequence. Additionally, if the functors
in question preserve homotopy, we get a corresponding adjunction on the derived level. Provided
two of the left adjoint functors have again left adjoints on the non-derived level, we can iterate this procedure, starting
at every stage on the non-derived level, and noting we have a corresponding adjunction on the left derived level.
Applying this to \rref{ei1z2}, when $\widetilde{E\Z/2}\wedge inf_{\Z/2}^{\{e\}}?$ has
no further iterated left adjoints, by Corollary \ref{cfive}, it does not preserve products, and hence the corresponding
iterated left adjoint of $\Z/2_+\wedge ?$ does not, as the remaining term of the (co)fiber sequence does. (More
details could be given here, but below we will actually compute these functors explicitly.)
As we showed in
Subsection \ref{inffixed}, the left derived functor of $\widetilde{E\Z/2}\wedge inf_{\Z/2}^{\{e\}}?$
has a sequence of a total of $3$ left adjoints, and not more. 

Thus, in this case, precisely
the functors $f_{(-1)}$, $f_{(2)}$, $f^{(-2)}$ exist, leading to a chain of a total of $7$ adjoints. 
In fact, these functors can be described geometrically by taking successive left adjoints of \rref{ei1z2}:
We have
$$f^{(-1)}X= res_{\{e\}}^{\Z/2}X/X^{\Z/2},$$
$f_{(2)}Y$ is the fiber of the canonical morphism $\Z/2_+\wedge Y \r inf_{\Z/2}^{\{e\}} Y$, and
$f^{(-2)}X$ is the cofiber of a morphism 
$$X/(\Z/2)\r res_{\{e\}}^{\Z/2}X/X^{\Z/2},$$
which is a variant of the transfer. Again, we see that this functor has no left adjoint.

\vspace{3mm}
One can also see that the right projection formula \rref{erproj} fails for the case
$G=\Z/2$, $H=\{e\}$ and the trivial universe. In effect, if this formula were true, it would say (putting $Y=S$)
that for a
$\Z/2$-$\R^\infty$-spectrum $X$, the canonical morphism
\beg{ei1rp}{X\wedge F(\Z/2_+,S)\r F(\Z/2_+,inf_{\Z/2}^{\{e\}}res_{\{e\}}^{\Z/2}X)
}
is an equivalence. Since the analogous statement with $F(\Z/2_+,?)$ replaced by $\Z/2_+\wedge ?$ holds, 
this is equivalent to the canonical morphism
\beg{ei1rp1}{\widetilde{E\Z/2}\wedge X\r \widetilde{E\Z/2}\wedge inf_{\Z/2}^{\{e\}}res_{\{e\}}^{\Z/2}X
}
being an equivalence (since $\widetilde{E\Z/2}\wedge X\sim \widetilde{E\Z/2}\wedge (X^{\Z/2})_{fixed}$, the 
canonical morphism \rref{ei1rp1} is induced by the canonical morphism $X^{\Z/2}\r E_{\{e\}}$, which
is also \rref{ei1rp1} on fixed points). Thus, \rref{ei1rp1} is not an equivalence when $X$ is not fixed, 
and hence neither is \rref{ei1rp}.

\vspace{3mm}
What in this example can be generalized? Let us specialize to the case of a finite group $G$ and the trivial
universe. (The main significance of the finiteness being that the orbit category is finite.) In this case, the 
cofiber sequence \rref{ei1z2} generalizes to 
\beg{ei1g}{G\ltimes_H X\r F_H[G,X)\r \widetilde{E\mathscr{F}(H)}\wedge F_H[G,X)
}
where $\mathscr{F}(H)$ is the family of all subgroups of $G$ subconjugate to $H$. The cofibration sequence \rref{ei1g} 
can be used to gain information on both left and right adjoints. Recall that for a subgroup $K$ of $G$ which
is not subconjugate to $H$, we have 
$$F_H[G,X)^K\sim \prod_{a\in K\backslash G/H}X^{H\cap a^{-1}Ka}.$$
Denote, as usual, by $N(K)$ the normalizer of $K$, and $W(K)=N(K)/K$. Recall that, for a general (not 
necessarily normal) subgroup $K$ of $G$, $(?)^K$ is a functor $DG$-$\R^\infty$-spectra
$\r DW(K)$-$\R^\infty$-spectra.
As a $W(K)$-spectrum,
\beg{efgx}{F_H[G,X)^K\sim \bigvee_{[a]\in N(K)\backslash G/H} F_{W(K,H)}[W(K), X^{H\cap a^{-1}Ka})}
where 
$$W(K,H)=\frac{a^{-1}Ka\cdot H\cap a^{-1}N(K)a}{a^{-1}Ka}.$$
We can express $\widetilde{E\mathscr{F}(H)}\wedge F_H[G,X)$ as a finite homotopy (co)limit of 
$G$-$\R^\infty$-spectra of the form
$$G\ltimes_{N(K)}(F_H[G,X)^K)$$
where $K$ is not subconjugate to $H$, so by formula \rref{efgx} and \rref{ei1g}, we can, in principle, inductively 
write down a model for the right adjoint of $F_H[G,X)$ (since $|W(K)|<|G|$).

\vspace{3mm}
On the other hand, formula \rref{ei1g} can also be used to construct two left adjoints to $G\ltimes_HX$. By induction,
again, it suffices to prove that the functor $\widetilde{E}\mathscr{F}[H]\;\wedge ?$ has two left adjoints. This follows
from the following general fact, which can be traced back to \cite{lms}.

\begin{proposition}\label{plms}
For any family $\mathscr{F}$ of subgroups of a finite group $G$, the functor 
\beg{eplms}{\widetilde{E\mathscr{F}}\wedge ?:DG\text{-}\mathscr{\R^\infty}\text{-spectra}\r 
DG\text{-}\mathscr{\R^\infty}\text{-spectra}
}
has two left adjoints.
\end{proposition}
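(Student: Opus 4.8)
The plan is to pass to the diagram model, realise $\widetilde{E\mathscr{F}}\wedge?$ as ``restrict to a cosieve, then extend by zero'', and read off its two left adjoints from the resulting string of Kan adjunctions, with the finiteness of $G$ entering decisively at the very last step.

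First, since $G$ is finite, identify $DG\text{-}\R^\infty\text{-spectra}$ with the derived category of $\mathcal{O}_G^{\mathrm{op}}$-diagrams of spectra, $X\mapsto\underline{X}$ with $\underline{X}(G/K)=X^K$. Let $\mathscr{F}^c$ be the set of subgroups not in $\mathscr{F}$; because $\mathscr{F}$ is closed under subconjugacy, $\mathscr{F}^c$ is closed under passage to over-groups, so the full subcategory $\mathcal{U}\subseteq\mathcal{O}_G$ on the orbits $G/K$ with $K\in\mathscr{F}^c$ is a cosieve (any morphism out of $\mathcal{U}$ again lands in $\mathcal{U}$). Writing $j\colon\mathcal{U}\to\mathcal{O}_G$ for the inclusion, I would check that $\widetilde{E\mathscr{F}}\wedge?$ corresponds to $j_*j^*$, i.e.\ to restricting $\underline{X}$ to $\mathcal{U}$ and then extending by zero over $\mathscr{F}$. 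This is a direct check on the generators $\Sigma^\infty G/K_+$: since $\widetilde{E\mathscr{F}}$ is the suspension spectrum of a $G$-space whose $K$-fixed points are $S^0$ for $K\in\mathscr{F}^c$ and contractible for $K\in\mathscr{F}$, naivety of the universe gives $(\widetilde{E\mathscr{F}}\wedge\Sigma^\infty A)^K=\Sigma^\infty((\widetilde{E\mathscr{F}})^K\wedge A^K)$, which is $(\Sigma^\infty A)^K$ on $\mathscr{F}^c$ and $0$ on $\mathscr{F}$, with transition maps inherited from $\underline{X}$ on $\mathscr{F}^c$. Both functors are exact and preserve coproducts, so they agree. (That extension by zero along a cosieve is the right Kan extension $j_*$, right adjoint to $j^*$, is elementary and can be traced back to the family formalism of \cite{lms}.)

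Next I would invoke the string of Kan adjunctions $j_!\dashv j^*\dashv j_*$, available because the target category of spectra is complete and cocomplete. Taking left adjoints of the composite gives $(j_*j^*)^L=(j^*)^L(j_*)^L=j_!\,j^*$; thus $\widetilde{E\mathscr{F}}\wedge?$ has the first left adjoint $j_!j^*$. (Equivalently, $\widetilde{E\mathscr{F}}\wedge?=j_*j^*$ is computed objectwise by the identity and the zero functor, hence preserves products, so Lemma \ref{lex}(b) produces this first left adjoint abstractly.) The second left adjoint is where the hypotheses are genuinely used. By Lemma \ref{lex}(b) it suffices to show that $j_!j^*$ preserves products. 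Now $(j_!F)(G/K)$ is the homotopy colimit of $F$ over the comma category of orbits $G/L$, $L\in\mathscr{F}^c$, receiving a map from $G/K$; because $G$ is finite this comma category is finite, so $j_!$, and hence $j_!j^*$, is given by finite homotopy colimits. In the stable category of spectra a finite homotopy colimit is also a finite homotopy limit, and therefore commutes with arbitrary products; consequently $j_!j^*$ preserves products and, by Lemma \ref{lex}(b), admits a left adjoint. This is the desired additional left adjoint of $\widetilde{E\mathscr{F}}\wedge?$.

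The crux — and the only place the finiteness of $G$ really enters — is this last step: a generic left Kan extension is an infinite colimit and would not commute with products, so $j_!j^*$ would fail to preserve products and no second left adjoint would exist. It is precisely the finiteness of the orbit category (equivalently, of $\mathscr{F}^c$) that turns $j_!$ into a finite homotopy colimit and, via stability, lets it commute with products. One can also run this argument entirely on the point-set level, replacing the homotopy-colimit computation by the cofibre-sequence bookkeeping used for restriction in Subsection \ref{ssres}; the diagram-model formulation above is, in my view, the most transparent.
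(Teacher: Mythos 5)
Your overall route is the same as the paper's: pass to $\mathcal{O}_G^{Op}$-diagrams of spectra, identify $\widetilde{E\mathscr{F}}\wedge ?$ with restriction to the co-family cosieve followed by extension by zero (your $j_*j^*$, the paper's $\phi_*\phi^*$), obtain the first left adjoint $j_!j^*$ (the paper's $\phi_\sharp\phi^*$), and reduce the existence of the second left adjoint, via Lemma \ref{lex}(b), to showing that $j_!j^*$ preserves products. All of this matches the paper's proof.

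The gap is in the step you yourself call the crux. From ``the comma categories are finite'' you infer that $j_!$ is given by finite homotopy colimits, which in a stable category are finite homotopy limits and hence commute with arbitrary products. That inference is false for a general finite indexing category: if the category has non-identity automorphisms, its nerve is infinite-dimensional and the homotopy colimit over it is \emph{not} a finite homotopy colimit. The simplest example is the one-object category $\Z/2$: the homotopy colimit over it of a spectrum with trivial action is the homotopy orbit spectrum $B\Z/2_+\wedge X$, and the paper's Lemma \ref{lcounter} shows precisely that $B\Z/2_+\wedge ?$ fails to preserve countable products --- indeed this failure is what the paper uses in Subsection \ref{inffixed} to rule out further adjoints. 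So finiteness of $G$, hence of the comma categories, is not by itself what makes the argument work. The missing ingredient is a special feature of the orbit category that your proof never invokes: every morphism in $\mathcal{O}_G$ is an epimorphism of $G$-sets, so an endomorphism $h$ of an object $(G/L,\, f:G/K\r G/L)$ of your comma category satisfies $h\circ f=f$ and hence $h=\mathrm{id}$; equivalently, the automorphism group $W(L)=\mathcal{O}_G(G/L,G/L)$ acts freely on each morphism set $\mathcal{O}_G(G/K,G/L)\cong (G/L)^K$. This is exactly the freeness hypothesis isolated in the paper's Lemma \ref{lproducts}: it guarantees that the bar construction computing $j_!$ decomposes into finite wedges of copies of the input (rather than Borel-type homotopy orbit constructions), glued together by a finite homotopy colimit over a finite poset, and only then does stability give preservation of products. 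With this observation added, your argument closes and coincides with the paper's; as written, the decisive step is unjustified, and the reason you offer for it (mere finiteness) is demonstrably insufficient.
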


\begin{proof}
Recall that the category $DG\text{-}\mathscr{\R^\infty}\text{-spectra}$ is equivalent to the 
diagram derived category of $\mathcal{O}_G^{Op}$-spectra (i.e. with object-wise
equivalences) where $\mathcal{O}_G$ is the orbit 
category of $G$. We can identify a family $\mathscr{F}$ with a full subcategory of $\mathcal{O}_G$
on the subgroups which belong to the family, and the corresponding {\em co-family} $\widetilde{\mathscr{F}}$
with the full subcategory of $\mathcal{O}_G$ on all the remaining subgroups. 

For an $\mathscr{F}^{Op}$-spectrum $X$, a $G$-$\R^\infty$-spectrum $\widetilde{E\mathscr{F}}\wedge X$ is
well-defined, and if we denote this functor by $\phi_*$, it is right adjoint to the forgetful
functor $\phi^*$ from $G$-$\R^\infty$-spectra (=$\mathcal{O}_G^{Op}$-spectra) to 
$\widetilde{\mathscr{F}}^{Op}$-spectra. This functor then has a left adjoint $\phi_\sharp=\mathcal{O}_G^{Op}\ltimes_{
\widetilde{\mathscr{F}}}?$. The left adjoint to \rref{eplms} is $\phi_\sharp \phi^*$.

To show that a further left adjoint exists, by Corollary \ref{cfive}, it suffices to show that the functor
$\phi_\sharp \phi^*$ preserves products. This follows from the following result.
\end{proof}

\begin{lemma}\label{lproducts}
Let $\mathscr{C}$ be a finite category in which every endomorphism has an inverse, and let $F$ be a 
contravariant functor from $\mathscr{C}$ to finite sets. Suppose that for every $x\in Obj(\mathscr{C})$,
$\mathscr{C}(x,x)$ acts freely (from the right) on $F(x)$. Then the functor
\beg{ebarc}{B_\wedge(F_+,\mathscr{C}_+,?):\mathscr{C}\text{-spectra}\r\text{Spectra}}
preserves products.
\end{lemma}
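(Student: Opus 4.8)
The plan is to identify the two-sided bar construction $B_\wedge(F_+,\mathscr{C}_+,?)$ with an ordinary homotopy colimit over the category of elements of $F$, and then to show that the freeness hypothesis forces this homotopy colimit to be a \emph{finite} one, which in a stable setting automatically commutes with arbitrary products. First I would recall the standard identification of an $F$-weighted homotopy coend with an unweighted homotopy colimit. Let $\mathbf{el}(F)$ denote the category of elements of the contravariant functor $F$: its objects are pairs $(c,s)$ with $c\in\mathrm{Obj}(\mathscr{C})$ and $s\in F(c)$, and a morphism $(c,s)\r(c',s')$ is a morphism $\phi:c\r c'$ of $\mathscr{C}$ with $F(\phi)(s')=s$. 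Writing $\pi:\mathbf{el}(F)\r\mathscr{C}$ for the projection, one has a natural equivalence
$$B_\wedge(F_+,\mathscr{C}_+,M)\simeq \operatorname*{hocolim}_{\mathbf{el}(F)}(M\circ\pi),$$
since smashing with $F(c)_+$ is the copower by the finite set $F(c)$, and an $F$-weighted bar construction computes the homotopy colimit over $\mathbf{el}(F)$. As products of $\mathscr{C}$-spectra are formed objectwise, $(\prod_i M_i)\circ\pi=\prod_i(M_i\circ\pi)$, so it suffices to prove that $\operatorname*{hocolim}_{\mathbf{el}(F)}$ commutes with products.

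The heart of the argument is a structural statement about $\mathbf{el}(F)$. For an object $(c,s)$, an endomorphism is a $\phi\in\mathscr{C}(c,c)$ with $F(\phi)(s)=s$; since every endomorphism of $\mathscr{C}$ is invertible, $\mathscr{C}(c,c)$ is a group acting (on the right, via $s\cdot\phi=F(\phi)(s)$) on the finite set $F(c)$, and by the freeness hypothesis the stabilizer of $s$ is trivial, so $\phi=\mathrm{id}$. Thus $\mathbf{el}(F)$ has only identity endomorphisms. Passing to a skeleton $\mathbf{sk}\subseteq\mathbf{el}(F)$ (which does not change the homotopy colimit, as the inclusion is an equivalence of categories) I would then verify that $\mathbf{sk}$ has a finite nerve: in a chain $x_0\r x_1\r\cdots\r x_n$ of non-identity morphisms, a repetition $x_i=x_j$ would make the intermediate composite an identity endomorphism, which forces one of the arrows to be an isomorphism between distinct skeletal objects, and this is impossible. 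Hence there are no repetitions, every chain of non-identity morphisms has length at most $|\mathrm{Obj}(\mathbf{sk})|$, and $\mathbf{sk}$ has only finitely many non-degenerate simplices.

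It then follows that $\operatorname*{hocolim}_{\mathbf{el}(F)}(M\circ\pi)$ is a \emph{finite} homotopy colimit: it is built from finitely many wedges and homotopy cofiber sequences along the (finite) skeletal filtration of the bar construction of $\mathbf{sk}$. In the category of spectra, and likewise in $G$-$\R^\infty$-spectra, arbitrary products are exact, that is, they preserve homotopy cofiber sequences, since cofiber and fiber sequences agree and products are homotopy limits; and they manifestly commute with finite wedges. Therefore the canonical comparison map from $\operatorname*{hocolim}_{\mathbf{el}(F)}\big((\prod_i M_i)\circ\pi\big)$ to $\prod_i \operatorname*{hocolim}_{\mathbf{el}(F)}(M_i\circ\pi)$ is an equivalence, which is precisely the assertion.

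The main obstacle is the structural step of the second paragraph: showing that the freeness hypothesis collapses all stabilizer subgroups, so that $\mathbf{el}(F)$ becomes, up to equivalence, a category with finite nerve. This is exactly where freeness is indispensable. If some $s\in F(c)$ had a nontrivial stabilizer $\Gamma\subseteq\mathscr{C}(c,c)$, the corresponding component of $\mathbf{el}(F)$ would contribute the homotopy orbit spectrum $M(c)_{h\Gamma}$ (carrying, for instance, a $B\Z/2_+$-type factor), and such homotopy orbits do not commute with products, in agreement with the counterexamples obtained earlier in the paper. Freeness is what removes these infinite-dimensional contributions and leaves a finite homotopy colimit.
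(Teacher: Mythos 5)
Your proof is correct, but it is organized differently from the paper's, which disposes of the lemma in two sentences along the following lines: first, when $\mathscr{C}$ is a group, freeness of the action on the finite set $F$ makes $B_\wedge(F_+,\mathscr{C}_+,?)$ a finite wedge of copies of the argument (``a sum of finitely many copies''), which preserves products; second, for general $\mathscr{C}$, the functor is assembled from the corresponding functors for the automorphism groups of the objects by a finite homotopy colimit over a poset (of isomorphism classes), and finite homotopy colimits commute with products by stability. You instead flatten everything at once: you identify the weighted bar construction with the homotopy colimit over the category of elements $\mathbf{el}(F)$, observe that freeness forces every endomorphism in $\mathbf{el}(F)$ to be an identity, deduce that a skeleton of $\mathbf{el}(F)$ has finite nerve, and again finish by stability. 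The two decompositions are closely related --- a skeleton of $\mathbf{el}(F)$ packages exactly the paper's poset together with the orbit sets $F(c)/\mathscr{C}(c,c)$, each free orbit becoming an object with trivial automorphism group --- but you invoke freeness once, globally, rather than locally at each automorphism group. What your route buys is explicitness and self-containment: the finiteness of the indexing diagram is verified combinatorially, with no need to describe the assembling poset or the stratification of an EI-category; note also that your skeleton step is genuinely necessary, not cosmetic, since the nerve of $\mathbf{el}(F)$ itself has arbitrarily long non-degenerate chains whenever two distinct objects are isomorphic, and you correctly route around this. What the paper's route buys is brevity, and a formulation that matches how the functor actually arises in Proposition \ref{plms}, namely glued out of group-orbit pieces; your closing remark about nontrivial stabilizers contributing homotopy orbits $M(c)_{h\Gamma}$ that fail to commute with products is exactly the right explanation of why the freeness hypothesis cannot be dropped, and is consistent with the paper's counterexample via Lemma \ref{lcounter}.
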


\begin{proof}
In the case when $\mathscr{C}$ is a group, under our assumptions, the functor \rref{ebarc} is just a 
``sum of finitely many copies", so it preserves products. In the general case, it arises from the corresponding
functors for the automorphism groups of $\mathscr{C}$ by a finite homotopy colimit over a poset, so the
conclusion follows from stability.
\end{proof}

\vspace{3mm}
It is not difficult to give an example of an inclusion of finite group $H\subset G$ where the functor
$G\ltimes_H?$ from $H$-$\R^\infty$-spectra to $G$-$\R^\infty$-spectra does not have three left adjoints. Let 
$G=\Z/4$, $H=\Z/2$. In this case, \rref{ei1g} becomes
\beg{ebarc1}{(\Z/4)\ltimes_{\Z/2}X\r F_{\Z/2}[\Z/4,X)\r \widetilde{E[\Z/4]}\wedge X^{\Z/2},}
so again by induction, the existence of three left adjoints to $(\Z/4)\ltimes_{\Z/2}?$ would be equivalent
to the existence of three left adjoints to $\widetilde{E[\Z/4]}\wedge X^{\Z/2}$. We know however
that the left adjoint to that functor is $inf_{\Z/2}^{\{e\}}(?)^{\Z/4}$, whose left adjoint, in turn, is
$inf_{\Z/4}^{\{e\}}(?/(\Z/2)$. This functor does not preserve products, since $?/(\Z/2)$ does not,
and $inf_{\Z/4}^{\{e\}}$ has a left inverse $?^{\Z/4}$, which preserves products. Thus, 
$inf_{\Z/4}^{\{e\}}(?/(\Z/2))$ has no left adjoint, as claimed.

\vspace{3mm}
On the other hand, if $G$ is finite abelian, the functor $G\ltimes ?$ from spectra to $G$-$\R^\infty$-spectra
does have three left adjoints (and hence $res_{\{e\}}^G$ has four left adjoints, leading to a chain of
7 adjoints). Again, it suffices to show that the rightmost term of the cofibration sequence \rref{ei1g}, which
in this case is $\widetilde{EG}\wedge F[G,?)$, has three left adjoints. Again, we can model $\widetilde{EG}$
as a finite homotopy colimit of $\widetilde{E[H]}$ for subgroups $\{e\}\neq H\subseteq G$, and therefore
it suffices to show that $\widetilde{E[H]}\wedge F[G,?)\sim \widetilde{E[H]}\wedge F[G,?)^H$ have three
left adjoints. However, for a spectrum $X$, $F[G,X)^H$ is a sum of (finitely many) copies of $X$, so we are
reduced to showing that $\widetilde{E[H]}\wedge X$ from spectra to $G$-$\R^\infty$-spectra
has three left adjoints. But the left adjoint to that functor is $(res_H^G?)^H$, which we already know has
two left adjoints.

\vspace{3mm}
\subsection{Smashing with a finite spectrum} \label{sssmash}
Here we are not dealing with symmetric monoidal functors, so there is no discussion
of projection formulas. The functor $X\wedge ?$ has a right adjoint $F(X,?)$ for any spectrum $X$,
which passes on to derived categories. Additionally, by Corollary \ref{cfive}, on derived
categories, $F(X,?)$ has an additional right adjoint if and only if $X\wedge ?$ preserves compact
objects, which happens if and only if $X$ is itself compact. If $\mathscr{U}$ is a complete universe, we of course have $F(X,?)=DX\wedge ?$, so there is an infinite chain
of adjunctions in both directions. 

Note that if $X$ is of the form $G/H_+$
where $H$ is a closed subgroup of $G$, on the level of derived categories, $X\wedge ?$ is isomorphic
to $G\ltimes (res_H^G?)$, so we already
know that it has two right adjoints. In the case when $X$ is finite, therefore,
the cellular filtration on $X$ gives a filtration on the second right adjoint, where the associated graded
pieces can, in principle, be described by the methods of Subsection \ref{ssres}.

\vspace{3mm}
We can show that the right adjoint to the endofunctor $F(\Z/2_+,?)$ in
$D\Z/2$-$\R^\infty$-spectra has no right adjoint. 
In effect, considering Corollary \ref{cfive}, if $F(\Z/2_+,?)$ preserved compact objects, 
$F(\Z/2_+,inf_{\Z/2}^{\{e\}}(?))$ (i.e. the middle term of \rref{ei1z2}) would. So, since
the first term of \rref{ei1z2} preserves compactness, it would follow that 
$\widetilde{E\Z/2}\wedge inf_{\Z/2}^{\{e\}}?$ preserves compactness, which we already
showed is not the case.

\vspace{3mm}
Regarding left adjoints, it follows from what we showed in Subsection \rref{ssres} that the endofunctor
$G\ltimes (res_H^G?)$, and hence the endofunctor $X\wedge ?$ for $X$ a finite spectrum, in $DG$-$\R^\infty$-spectra
with $G$ finite has two left adjoints (again, we consider the functors on the non-derived level,
and take strictly functorial homotopy (co)fibers, working inductively on the 
number of cells of $X$), thus leading to a chain of $5$ adjoints involving $X\wedge ?$. 
Furthermore, for $G=\Z/p$, since the only orbits are trivial and $\Z/p$, also by the results of Subsection
\ref{ssres}, we have a third left adjoint, leading to a chain of $6$ adjoints. 

\vspace{3mm}
In the case when $G=\Z/2$, $\mathscr{U}=\R^\infty$, we can show that no further left adjoints exist.
In effect, by the results of Subsection \rref{ssres}, the first left adjoint to $\Z/2\ltimes res_{\{e\}}^{\Z/2}X$
is $\Z/2\ltimes (res_{\{e\}}^{\Z/2}X/X^{\Z/2})$, and hence the second left adjoint is the fiber of 
the canonical morphism
\beg{exsecond}{
\Z/2\ltimes (res_{\{e\}}^{\Z/2}X/X^{\Z/2}))\r inf_{\Z/2}^{\{e\}}(res_{\{e\}}^{\Z/2}X/X^{\Z/2}).}
Since the first one of these functors has two left adjoints, it suffices to show that the second one does not. 
Now the left adjoint to the second functor \rref{exsecond} is the cofiber of
$$\Z/2\ltimes (X/(\Z/2))\r inf_{\Z/2}^{\{e\}}(X/(\Z/2))$$
which does not preserve products (for example, applying $res_{\{e\}}^{\Z/2}$, we get $X/(\Z/2)$ again,
which we already showed does not preserve products. Thus, we have a chain of precisely $6$ adjoints
in this case.

\vspace{3mm}
In the case when $G=\Z/4$, $\mathscr{U}=\R^\infty$, $X=(\Z/4)/(\Z/2)_+$, we can show that a third left
adjoint does not exist, thus leading to a chain of precisely $5$ adjoints. In effect, by \rref{ebarc1},
it suffices, again, to work with the  endofunctor $\widetilde{E[\Z/4]}\wedge (res_{\Z/2}^{\Z/4}X)^{\Z/2}$
instead. Its left adjoint is $\Z/4\ltimes_{\Z/2} inf_{\Z/2}^{\{e\}}(X^{\Z/4})$, which we can represent as the
fiber of the morphism
$$F_{\Z/2}[\Z/4,inf_{\Z/2}^{\{e\}}(X^{\Z/4}))\r \widetilde{E[\Z/4]}\wedge (inf_{\Z/2}^{\{e\}}(X^{\Z/4}))^{\Z/2}.
$$
The second functor is $\widetilde{E[\Z/4]}\wedge X^{\Z/4}$, which we already know has two
left adjoints, so it suffices to show that $F_{\Z/2}[\Z/4,inf_{\Z/2}^{\{e\}}(X^{\Z/4}))$
does not. In effect, its left adjoint is
$inf_{\Z/4}^{\{e\}}((res_{\Z/2}^{\Z/4}X)/(\Z/2))$. This functor does not preserve products: Since
$inf_{\Z/4}^{\{e\}}$ has a left inverse $res_{\{e\}}^{\Z/4}$ which preserves products, it suffices
to show that $(res_{\Z/2}^{\Z/4}X)/(\Z/2)$ does not preserve products, but that follows from the same
example as before.

\vspace{3mm}
\subsection{Change of universe} Let $i:\mathscr{U}\r\mathscr{V}$ be an isometry of $G$-universes. Then recall
(\cite{lms}, Section II.1) that
a $G$-$\mathscr{V}$-spectrum $Y$ as an object of the derived category can be described by describing the
$G$-$\mathscr{U}$-spectra $i^*\Sigma^V Y$ where $V$ runs through finite-dimensional
$G$-representations contained in 
$\mathscr{V}$ (and it suffices to consider those representations $V$ which do not have irreducible summands in 
$\mathscr{U}$). Thus, to describe $i_*$, it suffices to describe $i^*\Sigma^V i_*$, which is right adjoint to
\beg{ei3la}{i^*\Sigma^{-V}i_\sharp X=\Omega^Vi^*i_\sharp X=
\operatornamewithlimits{hocolim}_{W}\Omega^{W+V}\Sigma^WX}
where again, $W$ runs through subrepresentations of $\mathscr{V}$ with no irreducible summand contained in 
$\mathscr{U}$.
But we know that $\Omega^W$ has right adjoint $R^W=R(S^W,?)$, so the right adjoint
to \rref{ei3la} is
\beg{ei3ea}{i^*\Sigma^Vi_* X=\operatornamewithlimits{holim}_{W} \Omega^WR^{W+V}X.}
From this point of view, we have a description of the functor $i_*$.

\vspace{3mm}
Regarding additional adjoints, in general, $i_\sharp$ does not preserve products, and thus does not have a 
left adjoint. To see this, let us consider again the case $G=\Z/2$, where $\mathscr{U}=\R^\infty$ is
the trivial universe and $\mathscr{V}$ is the complete universe. We will show that the functor 

\begin{center}
$(i^*(i_\sharp(inf_{\Z/2}^{\{e\}})))^{\Z/2}:D$Spectra$\r D$Spectra
\end{center}

\noindent
does not preserve products, which is sufficient, since the functors $(?)^{\Z/2}$, $i^*$, $inf_{\Z/2}^{\{e\}}$
do preserve products. In effect, it is well known that we have a cofibration sequence
$$B\Z/2_+\wedge X\r (i^*(i_\sharp(inf_{\Z/2}^{\{e\}}X)))^{\Z/2}\r X$$
(see for example \cite{lms}, Section II.7, or, even much more explicitly, and generally, Section V.11)
where the third term preserves products, and again, the connecting map is
natural (it is, in fact, in this case $0$), so it suffices to prove that the first term does not, which is Lemma
\ref{lcounter}.

\vspace{3mm}
For the same reason, $i^*$ does not preserve compact objects (since the functors
$(?)^{\Z/2}$, $inf_{\Z/2}^{\{e\}}$ do), so $i_*$ does not have a right adjoint. 

\vspace{3mm}
The right projection formula \rref{erproj}, for $x=\Z/2_+$, $y=S$, (where $G=\Z/2$, $\mathscr{U}$ is
the trivial universe and $\mathscr{V}$ is the complete universe) would say that
$$\Z/2_+\wedge i^*(S)\sim i^*(\Z/2_+).$$
We know that the right hand side has fixed points by the Wirthm\"{u}ller isomorphism, while the left hand side
does not. Therefore, \rref{erproj} is false.

\vspace{3mm}
\subsection{Inflation - the general case.} \label{ssinfg}
By formula \rref{einfch}, the general case of the inflation reduces
to the case of an $H$-fixed universe, and change of universes. However, it remains to resolve the question
of how many adjoints we have, and the question of a projection formula. 

In the case when $G=\Z/2$ and $\mathscr{U}$ is the complete universe, we have, again, for a spectrum $X$,
a cofibration 
sequence
$$B\Z/2_+\wedge X\r (inf_{\Z/2}^{\{e\}}X)^{\Z/2}\r X,$$
thus showing that $(?)^{\Z/2}$ does not preserve compact objects, since $inf_{\Z/2}^{\{e\}}$ does. Therefore,
the right adjoint to $(?)^{\Z/2}$ in this case does not have a further right adjoint by Corollary \ref{cfive}.
On the other hand, this situation actually satisfies the assumptions of \cite{sanders}, and therefore, we 
also know that the
functor $inf_{\Z/2}^{\{e\}}$ does not have a left adjoint.

\vspace{3mm}
We also know from \cite{sanders} that in the case of inflation from a complete universe to a complete universe,
the right projection formula \rref{erproj} is satisfied. However, it turns out to be true in general, which is curious, since
it is false for change of universe. In effect, in the general case, the right projection formula asserts an equivalence
\beg{eddd}{\diagram
X\wedge (Y^H)\rto^(.4)\sim & ((inf_G^JX)\wedge Y)^H.
\enddiagram
}
Since both sides are stable under desuspensions by finite subrepresentations of $\mathscr{U}^H$, it suffices
to consider the case when $X$ is a $J$-space. In that case, however, (when applied to a cell spectrum $Y$), the 
$V$'th space of both sides for $V$ $H$-fixed is the colimit of $\Omega^W(X\wedge Y_{V+W})$ over finite 
subrepresentations $W$ of $\mathscr{U}$.

\vspace{3mm}
\subsection{Geometric fixed points.} As already remarked, the most universal case is the case of
a complete universe. In that case, the assumptions of \cite{sanders} are satisfied, so we know that there
are two right adjoints, and the right projection formula \rref{erproj} holds. This turns out to be the case
in general. In fact, the geometric fixed point functor coincides with the fixed point functor in the case of
an $H$-fixed universe, so this case also generalizes the rightmost three functors of the chain of
$5$ adjoints for inflation in the case when $\mathscr{U}$ is $H$-fixed. By the same arguments, then, 
one shows that in general, the right adjoint to $\Phi^H$ is 
\beg{ephir}{\widetilde{E[H]}\wedge inf_G^J(?),} 
which, in turn, has the right adjoint $(F(\widetilde{E[H]},?)^H$. It is easy to see that in the case
of a complete universe with $G=\Z/2$, $H=\{e\}$, \rref{ephir} does not preserve compact objects,
since $\widetilde{E\Z/2}$ is not compact (as $E\Z/2_+$ is not). Therefore, the second right adjoint to
$\Phi^{\Z/2}$ does not have an additional right adjoint in this case, and by \cite{sanders}, $\Phi^{\Z/2}$ does
not have a left adjoint. Thus, in this case, we have the ``$3$-scenario" of \cite{sanders}.

\vspace{3mm}
In the case of a complete $G$-universe $\mathcal{U}$, we have the right projection formula
by \cite{sanders}, but in fact, again, it is true in general: it asserts an equivalence (for cell spectra)
of the form
$$\diagram
X\wedge \widetilde{E[H]}\wedge inf_J^GY\rto^\sim & \Phi^HX\wedge \widetilde{E[H]}\wedge Y,
\enddiagram
$$
which holds for the same reason as in the case of an $H$-fixed universe.

\section{Appendix}

We record here some auxilliary results, which we consider known, but for which we could not find
an easy reference.

\begin{lemma}\label{lcounter}
Let $K$ denote (non-equivariant complex) periodic $K$-theory. Then the canonical morphism
\beg{ebcounter}{\diagram
\displaystyle B\Z/2_+\wedge (\prod_\N K)\rto^\nsim &\displaystyle\prod_\N (B\Z/2_+\wedge K)
\enddiagram}
is not an equivalence.
\end{lemma}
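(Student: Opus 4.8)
The plan is to show that the canonical map $B\Z/2_+\wedge(\prod_\N K)\r\prod_\N(B\Z/2_+\wedge K)$ fails to be an equivalence by detecting the discrepancy on a single homotopy group, where the countable product interacts badly with the smash product. The essential point is that smashing with the infinite complex $B\Z/2_+$ does not commute with countable products, and this can be witnessed by a $\lim^1$ obstruction or by a direct homotopy-group computation once we recall the mod-$2$ (co)homology of $B\Z/2$.

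\medskip\noindent
First I would reduce to a homotopy-group statement. Since both sides are spectra, it suffices to find an integer $n$ for which the induced map on $\pi_n$ is not an isomorphism. For the target, a product of spectra commutes with homotopy groups, so $\pi_n(\prod_\N(B\Z/2_+\wedge K))\cong\prod_\N\pi_n(B\Z/2_+\wedge K)$. For the source, I would use that $B\Z/2_+=\operatornamewithlimits{hocolim}_m B\Z/2^{(m)}_+$ is a homotopy colimit of its finite skeleta (each a finite spectrum), and that smashing with a \emph{finite} spectrum does commute with products. Thus $B\Z/2_+\wedge(\prod_\N K)=\operatornamewithlimits{hocolim}_m\big(B\Z/2^{(m)}_+\wedge\prod_\N K\big)=\operatornamewithlimits{hocolim}_m\prod_\N\big(B\Z/2^{(m)}_+\wedge K\big)$, and then analyze the homotopy groups of this homotopy colimit of products via the Milnor exact sequence involving $\lim$ and $\lim^1$ of the tower $\{\prod_\N\pi_*(B\Z/2^{(m)}_+\wedge K)\}_m$.

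\medskip\noindent
Next I would exploit periodicity of $K$-theory together with the structure of $K_*(B\Z/2)$. Recall that $\widetilde{K}^0(B\Z/2)\cong\Z_2$ (the $2$-adic integers, completion of the representation ring) and $\widetilde{K}^{-1}(B\Z/2)=0$, so the reduced $K$-homology groups $\widetilde{K}_*(B\Z/2)$ are built from infinitely many $2$-torsion contributions from the skeletal filtration, and the relevant tower does \emph{not} satisfy the Mittag-Leffler condition. Concretely, the target computes $\prod_\N\widetilde K_n(B\Z/2)\oplus(\text{free part})$, whereas the source, being a homotopy colimit of products of the finite-skeleton smash products, picks up a nonzero $\lim^1$ term. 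The comparison map is exactly the natural map from $\operatorname{hocolim}\prod$ to $\prod\operatorname{hocolim}$, and the failure is located in the $\lim^1$ of the countable product of the skeletal towers: since each skeletal tower of $K_*(B\Z/2)$ is not Mittag-Leffler, the countable product of such towers has nonvanishing $\lim^1$, producing a class in the source not seen in the target.

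\medskip\noindent
The main obstacle, and the step requiring genuine care, is verifying that the relevant $\lim^1$ term is actually nonzero rather than merely potentially nonzero. I would make this rigorous by choosing an explicit degree $n$ and an explicit inverse system: the key is that $\{\widetilde K_n(B\Z/2^{(m)})\}_m$ is a tower of finite $2$-groups whose transition maps are eventually multiplication-by-$2$ (reflecting the $2$-adic nature of $K^*(B\Z/2)$), so that the derived limit $\lim^1_m$ of the \emph{countable product} $\prod_\N\widetilde K_n(B\Z/2^{(m)})$ is a nontrivial group of the Ext-type $\lim^1$ of a non-Mittag-Leffler tower. One clean way to certify nonvanishing is to observe that $\lim^1$ of a countable product of towers equals the product of the $\lim^1$ terms modulo a correction, and that even one factor with $\lim^1\neq0$ forces the whole expression to be nonzero; alternatively, a rank/cardinality argument comparing the uncountable group $\prod_\N(\text{completed})$ appearing in the source with the target suffices. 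Either route pins the inequivalence to the arithmetic of $2$-adic completion in $K$-theory, which is the crux of why this particular $B\Z/2_+\wedge K$ example works.
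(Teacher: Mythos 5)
Your overall strategy---reduce to homotopy groups, filter $B\Z/2_+$ by finite skeleta, use that smashing with a finite spectrum commutes with products, and detect the failure in the arithmetic of $K_*(B\Z/2)$---can be made to work, but the technical mechanism you invoke is wrong. The skeletal filtration presents the source as a \emph{sequential homotopy colimit}, and for spectra $\pi_n(\operatorname{hocolim}_m X_m)\cong \operatorname{colim}_m \pi_n(X_m)$ with \emph{no} correction term: Milnor $\lim^1$ sequences and the Mittag-Leffler condition pertain to homotopy \emph{inverse} limits of towers, not to directed colimits (filtered colimits of abelian groups are exact). So the source does not ``pick up a nonzero $\lim^1$ term''; its homotopy is simply $\operatorname{colim}_m \prod_\N K_n((\R P^m)_+)$. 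Relatedly, your description of $\{\widetilde K_n(\R P^m)\}_m$ as a non--Mittag-Leffler tower with multiplication-by-$2$ transition maps misidentifies the structure: under skeletal inclusions these finite $2$-groups form a \emph{direct} system of monomorphisms with colimit $\widetilde K_1(B\Z/2)\cong \Z/2^\infty$, while the genuine inverse system in sight (the $K$-cohomology of skeleta) has surjective transition maps, hence vanishing $\lim^1$. As written, the step that is supposed to produce the obstruction therefore does not exist, and the vague cardinality fallback is not a substitute.

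The repair is to locate the failure where it actually lives: in the interchange of a filtered colimit with a countable product of abelian groups. Taking $n=1$ (so the free summand vanishes and $K_1((\R P^m)_+)=\widetilde K_1(\R P^m)$ is a finite $2$-group), your comparison map on $\pi_1$ is exactly the canonical map $\operatorname{colim}_m \prod_\N \widetilde K_1(\R P^m)\to \prod_\N \operatorname{colim}_m \widetilde K_1(\R P^m)=\prod_\N \Z/2^\infty$. Any element of the source is represented at a single finite stage $m$, so its components have $2$-exponent bounded by that of $\widetilde K_1(\R P^m)$; but since the images of the stages exhaust $\Z/2^\infty$, they are unbounded finite subgroups, so the target contains sequences whose components have orders growing without bound. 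Hence the map is not surjective, and the lemma follows with no $\lim^1$ anywhere. Note that this corrected argument is genuinely different from the paper's: there one uses the bottom row of the Tate diagram, observes that Borel cohomology $F(E\Z/2_+,?)^{\Z/2}$ preserves products, and reduces to the Tate construction, whose coefficients for $K$ are $\Q_2=\Z_2[1/2]$; the point is then that $(\prod_\N \Z_2)[1/2]\to \prod_\N \Q_2$ is not onto because of non-uniform denominators. Both proofs detect the same phenomenon---uniformly bounded versus unbounded powers of $2$---but the paper needs only the standard Tate computation for $K$-theory, whereas your route needs the $K$-homology of the skeleta $\R P^m$ and the (correct) colimit-versus-product algebra.
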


\begin{proof}
We are trying to show that in a particular case, Borel homology does not
preserve products. Since Borel cohomology preserves products, it suffices to work with Tate cohomology 
instead (recall the bottom row of the Tate diagram, which gives a cofibration sequence
between Borel homology, Borel cohomology and Tate cohomology; for
background on this, see \cite{gm}). Now the Borel cohomology of $K$ has coefficients (in dimension $0$) $\Z\oplus \Z_2$ where the Euler class
maps the first summand to the second, so the Tate cohomology is $\Q_2$. Similarly, the Tate cohomology
of a countable product of copies of $K$ is $2^{-1}(\displaystyle \prod_\N \Z_2)$, the canonical map of which 
into $\displaystyle \prod_\N \Q_2$ is not an isomorphism (because of non-uniformity of denominators).
\end{proof}

\begin{lemma}\label{lsmall}
Let $Y$ be a $T_1$-space and suppose we have an indexing set $I$, and for each $F\subset\subset I$ (meaning a
finite subset) a subspace $Y_F\subseteq Y$ (with the induced topology) such that
\begin{enumerate}
\item\label{sim1}
$\displaystyle Y=\bigcup_{F\subset\subset I} Y_F$ (with the colimit topology)

\item\label{sim2}
$Y_F\cap  Y_G=Y_{F\cap G}$ for $F,G\subset\subset I$

\item\label{sim3}
$F\subseteq G\Rightarrow Y_F\subseteq Y_G$.

\end{enumerate}
Suppose $f:X\r Y$ is a continuous map where $X$ is compact. Then there exists $F\subset\subset I$ such that
$f(X)\subseteq Y_F$.
\end{lemma}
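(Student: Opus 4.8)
The plan is to argue by contradiction: from a map $f$ whose image avoids every $Y_F$ I will extract an infinite, closed, discrete subset of $Y$ and pull it back to contradict compactness of $X$. So suppose that for every finite $F\subset\subset I$ we have $f(X)\not\subseteq Y_F$ (and note $X\ne\varnothing$, else the claim is trivial). First I would build inductively an increasing chain of finite sets $F_0\subseteq F_1\subseteq\cdots\subset\subset I$ and points $y_n=f(x_n)$ with $x_n\in X$ such that $y_n\in Y_{F_n}\setminus Y_{F_{n-1}}$ for all $n\ge 1$: starting from $F_0=\varnothing$, given $F_{n-1}$ the hypothesis $f(X)\not\subseteq Y_{F_{n-1}}$ lets me pick $x_n$ with $y_n\notin Y_{F_{n-1}}$; since $Y=\bigcup_F Y_F$ there is a finite $G_n$ with $y_n\in Y_{G_n}$, and I set $F_n=F_{n-1}\cup G_n$, so that $y_n\in Y_{F_n}$ by (3). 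The points $y_n$ are then automatically distinct.

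The heart of the argument, and the step I expect to be the main obstacle, is to show that $S=\{y_n:n\ge 1\}$ — and in fact every subset of it — is closed in $Y$. Because $Y$ carries the colimit topology, it suffices to show that $S\cap Y_F$ is finite for each finite $F$ (a finite set being closed in the $T_1$ space $Y_F$). This is exactly where conditions (2) and (3) enter: if $y_n\in Y_F$ then by (2) we have $y_n\in Y_F\cap Y_{F_n}=Y_{F\cap F_n}$, and were $F\cap F_n\subseteq F_{n-1}$, then (3) would force $y_n\in Y_{F_{n-1}}$, contrary to construction; hence $F$ must meet $F_n\setminus F_{n-1}$. Since the increments $F_n\setminus F_{n-1}$ are pairwise disjoint, the elements of $F$ witnessing this for different $n$ are distinct, so $\{n:y_n\in Y_F\}$ has at most $|F|$ elements. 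The identical bound applies to any subset of $S$, so each such subset meets every $Y_F$ in a closed set and is therefore closed in $Y$; thus $S$ is closed and discrete. The subtlety is that $F$ ranges over \emph{all} finite subsets, not merely the $F_n$ produced in the construction — which matters because $I$ may be uncountable and $F$ need not be comparable to any $F_n$ — and conditions (2) and (3) are precisely what reduce this to the disjointness of the increments.

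Finally I would contradict compactness. Lifting each $y_n$ to $x_n\in X$, consider the cover of $X$ by the open sets $V_0=f^{-1}(Y\setminus S)$ and $V_n=f^{-1}\bigl(Y\setminus(S\setminus\{y_n\})\bigr)$ for $n\ge 1$; these are open precisely because $S$ and each $S\setminus\{y_n\}$ were shown to be closed. For $m\ge 1$ one checks that $x_m\in V_n$ if and only if $n=m$, so any finite subfamily of this cover omits some $x_m$ and fails to cover $X$. This contradicts the compactness of $X$, and therefore $f(X)\subseteq Y_F$ for some finite $F$.
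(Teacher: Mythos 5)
Your proof is correct and follows essentially the same route as the paper's: the same inductive construction of the chain $F_0\subset F_1\subset\cdots$ and points $y_n\in Y_{F_n}\smallsetminus Y_{F_{n-1}}$, and the same key claim that each $Y_F$ contains only finitely many $y_n$ (you prove it by counting distinct witnesses in the pairwise disjoint increments $F_n\smallsetminus F_{n-1}$, whereas the paper stabilizes $G\cap\bigcup_n F_n$; both hinge identically on conditions (2) and (3)). Your final step phrases the compactness contradiction as an open cover with no finite subcover, which is just the dual of the paper's argument via the nested nonempty closed sets $f^{-1}(T_n)$ with empty intersection, so the two proofs coincide in substance.
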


\begin{proof}
If there exists no $F\subset\subset I$ with $F(X)\subseteq Y_F$, then, by induction, we can construct sets 
$$\emptyset=F_0\subset F_1\subset F_2\subset\dots \subset\subset I$$
and points
$$y_n\in (Y_{F_n}\cap f(X))\smallsetminus Y_{F_{n-1}}$$
for $n=1,2,\dots$. (Once we have constructed $F_{n-1}$, our assumption $f(X)\nsubseteq Y_{F_n-1}$
implies there is a point $y_n\in f(X)\smallsetminus Y_{F_{n-1}}$. By (1), $y_n\in Y_F$ for some
$F\subset\subset I$, and we can take $F_n:=F_{n-1}\cup F$.)

We claim that
\beg{esmall4}{
\parbox{3.5in}{If $G\subset\subset I$ and $S$ is an infinite subset of $\{0,1,2,\dots\}$, then
$\{y_s\mid s\in S\}\nsubseteq Y_G$.}
}
To see this, select such a $G$ and $S$. Then there is a natural number $n$ such that
$$G\cap (\bigcup_n F_n)=G\cap F_n.$$
Assuming that $y_{n+1}\in Y_G$, we would have, by assumption \rref{sim2},
and assumption \rref{sim3},
$$y_{n+1}\in Y_G\cap Y_{F_{n+1}}=Y_{G\cap F_{n+1}}=Y_{G\cap F_n}\subseteq Y_{F_n},$$
which is a contradiction. 

It follows from \rref{esmall4} and assumption (1) that for every $n$, the set
$$T_n=\{y_n,y_{n+1},\dots\}\subseteq Y$$
is closed in $Y$. (Indeed, for any $F\subset\subset I$, $T_n\cap Y_F$ must be a finite set.) Hence
$$f^{-1}(T_1)\supseteq f^{-1}(T_2)\supseteq\dots$$
are non-empty closed subsets of $X$, whereas
$$\bigcap f^{-1}(T_n)=f^{-1}(\bigcap T_n)= f^{-1}(\emptyset)=\emptyset,$$
contradicting the compactness of $X$.
\end{proof}

\begin{corollary}\label{ccompact}
Shift suspensions and desuspensions of 
suspension spectra of orbits are compact objects in the category $DG$-$\mathscr{U}$-spectra
for any compact Lie group $G$ and any universe $\mathscr{U}$.
\end{corollary}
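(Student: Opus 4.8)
The plan is to deduce the corollary from the point-set Lemma \ref{lsmall} together with the standard colimit description of equivariant homotopy groups. Recall that an object of a triangulated category is compact exactly when the represented functor sends coproducts to direct sums. Since the suspension $\Sigma$ is a coproduct-preserving self-equivalence of $DG$-$\mathscr{U}$-spectra, the class of compact objects is closed under $\Sigma^{\pm 1}$; hence it suffices to prove that $\Sigma^\infty_G(G/H)_+$ is compact for each closed subgroup $H\subseteq G$, the shift (de)suspensions then being automatic. Concretely, for an arbitrary family $\{Y^\alpha\}_{\alpha\in I}$ of $G$-$\mathscr{U}$-spectra I must show that the canonical map
$$\bigoplus_{\alpha\in I}\,[\Sigma^\infty_G(G/H)_+,\,Y^\alpha]_G\;\longrightarrow\;\Big[\Sigma^\infty_G(G/H)_+,\,\bigvee_{\alpha\in I}Y^\alpha\Big]_G$$
is an isomorphism. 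By the adjunction between $G\ltimes_H(?)$ and $res_H^G$ (valid for any universe) together with $\Sigma^\infty_G(G/H)_+\cong G\ltimes_H\Sigma^\infty_H S^0$, the right-hand group is the equivariant homotopy group $\pi_0^H(\bigvee_\alpha Y^\alpha)$, and the claim becomes the assertion that $\pi_n^H$ carries wedges to direct sums for every $n\in\Z$.

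Next I would bring in the colimit formula. Modelling all spectra by cell ($G$-CW) spectra, the wedge $\bigvee_\alpha Y^\alpha$ is again a cell spectrum whose $V$-th space is the wedge $\bigvee_\alpha Y^\alpha_V$, and for cell spectra the homotopy groups are computed as the filtered colimit
$$\pi_n^H\Big(\bigvee_\alpha Y^\alpha\Big)\;=\;\varinjlim_{V}\,\Big[S^{\,n+V},\,\bigvee_\alpha Y^\alpha_V\Big]^H,$$
where $V$ runs over the finite-dimensional subrepresentations of the universe and $[-,-]^H$ denotes $H$-equivariant based homotopy classes. The representation spheres $S^{n+V}$ are compact spaces, as are the cylinders $S^{n+V}\wedge I_+$ governing homotopies. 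I would then apply Lemma \ref{lsmall} with $X=S^{n+V}$ (respectively the cylinder), with $Y=\bigvee_\alpha Y^\alpha_V$ carrying the colimit topology, indexing set $I$, and $Y_F=\bigvee_{\alpha\in F}Y^\alpha_V$ the subwedges indexed by finite $F\subset\subset I$. Conditions (1)--(3) of Lemma \ref{lsmall} hold because distinct summands of a wedge meet only in the common basepoint (so $Y_F\cap Y_G=Y_{F\cap G}$), the subwedges are monotone in $F$, and their union with the colimit topology is the whole wedge, which is $T_1$; moreover each $Y_F$ is $H$-invariant since the $G$-action preserves each summand. The conclusion is that every representing map, and every homotopy, factors through a finite subwedge $\bigvee_{\alpha\in F}Y^\alpha_V$.

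From here the bijection assembles formally, and this is where the finiteness of $F$ is essential. For surjectivity, any class in $\pi_n^H(\bigvee_\alpha Y^\alpha)$ is represented by a map out of some $S^{n+V}$ which, by the previous step, factors through a finite subwedge; since a finite wedge of spectra agrees with the finite product, its homotopy is the finite direct sum $\bigoplus_{\alpha\in F}\pi_n^H(Y^\alpha)$, so the class lies in the image of $\bigoplus_{\alpha\in I}\pi_n^H(Y^\alpha)$. For injectivity, a class dying in every $\pi_n^H(Y^\alpha)$ has a representative factoring through some finite $\bigvee_{\alpha\in F}Y^\alpha_V$; each of the finitely many components becomes nullhomotopic after enlarging $V$, and taking the finite join of the corresponding representations gives a single stage of the colimit at which all components vanish simultaneously, so---again using that the finite wedge is the finite product---the representative is nullhomotopic there and the class is zero. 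The main obstacle is precisely this interface between compactness and the colimit over $V$: one must verify the hypotheses of Lemma \ref{lsmall} for the wedge topology, and then exploit the finiteness of $F$ twice---once to replace a single map by one into a finite product, and once to find a common indexing representation for the finitely many vanishing components---neither of which would be available without the compactness input supplied by Lemma \ref{lsmall}.
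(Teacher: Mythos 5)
Your overall strategy is the same as the paper's (reduce to maps out of compact spaces into constituent spaces and invoke Lemma \ref{lsmall}), but there is a genuine gap at the crucial step. You assert that ``the wedge $\bigvee_\alpha Y^\alpha$ is again a cell spectrum whose $V$-th space is the wedge $\bigvee_\alpha Y^\alpha_V$.'' In the category of $G$-$\mathscr{U}$-spectra used here (the framework of \cite{lms}, in which the adjoint structure maps are homeomorphisms), this is false: the spacewise wedge is only a prespectrum, because $\Omega^W$ does not commute with infinite wedges, and the coproduct of spectra is obtained by applying the spectrification functor to it. Its $V$-th constituent space is therefore
$$\Big(\bigvee_\alpha Y^\alpha\Big)_V=\operatorname*{colim}_W \Omega^W\Big(\bigvee_\alpha Y^\alpha_{V+W}\Big),$$
not $\bigvee_\alpha Y^\alpha_V$. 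Consequently your application of Lemma \ref{lsmall} --- with $Y$ the spacewise wedge and $Y_F$ the finite subwedges --- never confronts the space in which maps out of $S^{n+V}$ actually take values. This is precisely the difficulty that makes the corollary require a proof at all; assuming the coproduct is formed spacewise assumes it away. (Your colimit formula for $\pi_n^H$ of the wedge, read with the spacewise wedge on the right-hand side, is in fact true, but its proof requires exactly the missing compactness argument for the colimit over $W$, so it cannot be taken as an input here.)

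The repair is what the paper does: apply Lemma \ref{lsmall} with
$$Y=\operatorname*{colim}_W \Omega^{W} \Big(\bigvee_{i\in I} (Z_i)_{V+W}\Big),\qquad
Y_F=\operatorname*{colim}_W \Omega^{W} \Big(\bigvee_{i\in F} (Z_i)_{V+W}\Big),$$
checking that hypotheses (1)--(3) hold for these colimits because they hold for wedges of based spaces and are inherited by the colimits over $W$; then maps (and homotopies) from the compact spaces $\Sigma^n(G/H)_+$ into the constituent spaces of the genuine coproduct factor through some $Y_F$, and from that point your assembly argument (finite wedge equals finite product, a common stage for finitely many nullhomotopies) goes through verbatim. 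A second, smaller issue: closure of compact objects under $\Sigma^{\pm 1}$ disposes only of integer shifts, whereas the shift (de)suspensions in the statement include the functors $\Sigma^\infty_V$ for finite-dimensional $V\subset\mathscr{U}$; these are handled either by noting that $\Sigma^{\pm V}$ is likewise a coproduct-preserving self-equivalence of the derived category, or, as in the paper, by the adjunction identifying derived-category maps out of shift (de)suspensions of suspension spectra of orbits with space-level maps out of suspensions of orbits into the constituent spaces.
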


\begin{proof}
Consider $G$-$\mathscr{U}$-cell spectra $Z_i$, $i\in I$. 
The assumptions of Lemma \ref{lsmall} are satisfied first for a wedge of based spaces, and hence are also
satisfied for
$$Y=\operatornamewithlimits{colim}_W \Omega^{W} (\bigvee_{i\in I} (Z_i)_{V+W}),$$
$$Y_F=\operatornamewithlimits{colim}_W \Omega^{W} (\bigvee_{i\in F} (Z_i)_{V+W}).$$
Now recall from \cite{lms}, Sections I.2, I.3 that the spaces $Y$, $Y_F$ are the constituent spaces of 
coproducts of the spectra $Z_i$ over $I$, $F$. Additionally, maps from shift suspensions and desuspensions
of suspension spectra of orbits into a spectrum are, by adjunction, the same thing as maps from suspensions
of orbits into the constituent spaces, at which point we can apply Lemma \ref{lsmall}.
\end{proof}

\vspace{10mm}

\end{document}